\documentclass[11pt]{preprint}
\usepackage[full]{textcomp}
\usepackage[osf]{newtxtext} 
\usepackage{colortbl}
\usepackage{dsfont}
\usepackage{comment}
\usepackage[usenames,dvipsnames]{xcolor}

\usepackage{amssymb}
\usepackage{lmodern}
\usepackage{mathtools}

\usepackage{hyperref}
\usepackage{breakurl}
\usepackage{float}
\usepackage{aligned-overset}
\usepackage{mhenvs}
\usepackage{mhequ} 
\newcommand{\be}{\begin{equation*}}
\newcommand{\ee}{\end{equation*}}
\usepackage{mhsymb}
\usepackage{booktabs}
\usepackage{tikz,tkz-tab}
\tikzset{every picture/.style={line width=0.75pt}} 
\usetikzlibrary{matrix,decorations.pathreplacing, calc, positioning,fit,arrows.meta}

\usepackage{pgfplots}
\pgfplotsset{compat=1.18}
\usetikzlibrary{babel}

\usepackage{tcolorbox}
\usepackage{mathrsfs}
\usepackage[utf8]{inputenc}
\usepackage{longtable}
\usepackage{wrapfig}
\usepackage{subcaption}
\usepackage{mathrsfs}
\usepackage{epsfig}
\usepackage{microtype}
\usepackage{comment}
\usepackage{wasysym}
\usepackage{centernot}
\usepackage{enumitem}
\usepackage{bm}
\usepackage{stackrel}
\usepackage{graphicx}
\makeatletter
\newcommand{\globalcolor}[1]{%
  \color{#1}\global\let\default@color\current@color
}
\makeatother

\usetikzlibrary{calc}
\usetikzlibrary{decorations}
\usetikzlibrary{positioning}
\usetikzlibrary{shapes}
\usetikzlibrary{external}

\definecolor{blush}{rgb}{0.87, 0.36, 0.51}
	\definecolor{brightcerulean}{rgb}{0.11, 0.67, 0.84}
	\definecolor{greenryb}{rgb}{0.4, 0.69, 0.2}

\newif\ifdark
\darkfalse

\ifdark
\definecolor{darkred}{rgb}{0.9,0.2,0.2}
\definecolor{darkblue}{rgb}{0.7,0.3,1}
\definecolor{darkgreen}{rgb}{0.1,0.9,0.1}
\definecolor{franck}{rgb}{0,0.8,1}
\definecolor{pagebackground}{rgb}{.15,.21,.18}
\definecolor{pageforeground}{rgb}{.84,.84,.85}
\pagecolor{pagebackground}
\AtBeginDocument{\globalcolor{pageforeground}}
\definecolor{symbols}{rgb}{0,0.7,1}
\colorlet{connection}{red!80!black}
\colorlet{boxcolor}{blue!50}

\else

\definecolor{darkred}{rgb}{0.7,0.1,0.1}
\definecolor{darkblue}{rgb}{0.4,0.1,0.8}
\definecolor{darkgreen}{rgb}{0.1,0.7,0.1}
\definecolor{franck}{rgb}{0,0,1}
\definecolor{pagebackground}{rgb}{1,1,1}
\definecolor{pageforeground}{rgb}{0,0,0}
\colorlet{symbols}{blue!90!black}
\colorlet{connection}{red!30!black}
\colorlet{boxcolor}{blue!50!black}

\fi

\def\slash{\leavevmode\unskip\kern0.18em/\penalty\exhyphenpenalty\kern0.18em}
\def\dash{\leavevmode\unskip\kern0.18em--\penalty\exhyphenpenalty\kern0.18em}

\DeclareMathAlphabet{\mathbbm}{U}{bbm}{m}{n}

\DeclareFontFamily{U}{BOONDOX-calo}{\skewchar\font=45 }
\DeclareFontShape{U}{BOONDOX-calo}{m}{n}{
  <-> s*[1.05] BOONDOX-r-calo}{}
\DeclareFontShape{U}{BOONDOX-calo}{b}{n}{
  <-> s*[1.05] BOONDOX-b-calo}{}
\DeclareMathAlphabet{\mcb}{U}{BOONDOX-calo}{m}{n}
\SetMathAlphabet{\mcb}{bold}{U}{BOONDOX-calo}{b}{n}

\setlist{noitemsep,topsep=4pt,leftmargin=1.5em}

\DeclareMathAlphabet{\mathbbm}{U}{bbm}{m}{n}

\DeclareMathAlphabet{\mcb}{U}{BOONDOX-calo}{m}{n}
\SetMathAlphabet{\mcb}{bold}{U}{BOONDOX-calo}{b}{n}
\DeclareFontFamily{U}{mathx}{\hyphenchar\font45}
\DeclareFontShape{U}{mathx}{m}{n}{
      <5> <6> <7> <8> <9> <10>
      <10.95> <12> <14.4> <17.28> <20.74> <24.88>
      mathx10
      }{}
\DeclareSymbolFont{mathx}{U}{mathx}{m}{n}
\DeclareMathSymbol{\bigtimes}{1}{mathx}{"91}

\def\emptyset{{\centernot\ocircle}}

\providecommand{\figures}{false}
{ \ifthenelse{\equal{\figures}{false}} {#1}{$$ {\rm Figure \ missing !} $$} }{}

\usepackage{mathtools}

\tikzstyle{tinydots}=[dash pattern=on \pgflinewidth off \pgflinewidth]
\tikzstyle{superdense}=[dash pattern=on 4pt off 1pt]

\newcommand{\beq}{\begin{equation}}
\newcommand{\eeq}{\end{equation}}

\usepackage{empheq}

\def\${|\!|\!|}

\newcounter{theorem}

\newtheorem{ex}[theorem]{Example}

\newenvironment{DIFnomarkup}{}{} 

\theorembodyfont{\rmfamily}

\newfont{\indic}{bbmss12}

\def\Nabla_#1{\nabla_{\!#1}}

\makeatletter
\pgfdeclareshape{crosscircle}
{
  \inheritsavedanchors[from=circle] 
  \inheritanchorborder[from=circle]
  \inheritanchor[from=circle]{north}
  \inheritanchor[from=circle]{north west}
  \inheritanchor[from=circle]{north east}
  \inheritanchor[from=circle]{center}
  \inheritanchor[from=circle]{west}
  \inheritanchor[from=circle]{east}
  \inheritanchor[from=circle]{mid}
  \inheritanchor[from=circle]{mid west}
  \inheritanchor[from=circle]{mid east}
  \inheritanchor[from=circle]{base}
  \inheritanchor[from=circle]{base west}
  \inheritanchor[from=circle]{base east}
  \inheritanchor[from=circle]{south}
  \inheritanchor[from=circle]{south west}
  \inheritanchor[from=circle]{south east}
  \inheritbackgroundpath[from=circle]
  \foregroundpath{
    \centerpoint%
    \pgf@xc=\pgf@x%
    \pgf@yc=\pgf@y%
    \pgfutil@tempdima=\radius%
    \pgfmathsetlength{\pgf@xb}{\pgfkeysvalueof{/pgf/outer xsep}}%
    \pgfmathsetlength{\pgf@yb}{\pgfkeysvalueof{/pgf/outer ysep}}%
    \ifdim\pgf@xb<\pgf@yb%
      \advance\pgfutil@tempdima by-\pgf@yb%
    \else%
      \advance\pgfutil@tempdima by-\pgf@xb%
    \fi%
    \pgfpathmoveto{\pgfpointadd{\pgfqpoint{\pgf@xc}{\pgf@yc}}{\pgfqpoint{-0.707107\pgfutil@tempdima}{-0.707107\pgfutil@tempdima}}}
    \pgfpathlineto{\pgfpointadd{\pgfqpoint{\pgf@xc}{\pgf@yc}}{\pgfqpoint{0.707107\pgfutil@tempdima}{0.707107\pgfutil@tempdima}}}
    \pgfpathmoveto{\pgfpointadd{\pgfqpoint{\pgf@xc}{\pgf@yc}}{\pgfqpoint{-0.707107\pgfutil@tempdima}{0.707107\pgfutil@tempdima}}}
    \pgfpathlineto{\pgfpointadd{\pgfqpoint{\pgf@xc}{\pgf@yc}}{\pgfqpoint{0.707107\pgfutil@tempdima}{-0.707107\pgfutil@tempdima}}}
  }
}
\makeatother

\def\symbol#1{\textcolor{symbols}{#1}}

\def\decorate#1#2{
        \ifnum#2>0
    		\foreach \count in {1,...,#2}{
	       	let
				\p1 = (sourcenode.center),
                \p2 = (sourcenode.east),
				\n1 = {\x2-\x1},
				\n2 = {1mm},
				\n3 = {(1.3+0.6*(\count-1))*\n1},
				\n4 = {0.7*\n1}
			in 
        		node[rectangle,fill=symbols,rotate=30,inner sep=0pt,minimum width=0.2*\n2,minimum height=\n2] at ($(sourcenode.center) + (\n3,\n4)$) {}
				}
		\fi
        \ifnum#1>0
    		\foreach \count in {1,...,#1}{
	       	let
				\p1 = (sourcenode.center),
                \p2 = (sourcenode.east),
				\n1 = {\x2-\x1},
				\n2 = {1mm},
				\n3 = {(1.3+0.6*(\count-1))*\n1},
				\n4 = {0.7*\n1}
			in 
        		node[rectangle,fill=symbols,rotate=-30,inner sep=0pt,minimum width=0.2*\n2,minimum height=\n2] at ($(sourcenode.center) + (-\n3,\n4)$) {}
				}
		\fi
}

\tikzset{
    dectriangle/.style 2 args={
        triangle,
        alias=sourcenode,
        append after command={\decorate{#1}{#2}}
    },
    dectriangle/.default={0}{0},
}

\tikzset{
	cross/.style={path picture={ 
  		\draw[symbols]
			(path picture bounding box.south east) -- (path picture bounding box.north west) (path picture bounding box.south west) -- (path picture bounding box.north east);
		}},
root/.style={circle,fill=green!50!black,inner sep=0pt, minimum size=1.2mm},
        dot/.style={circle,fill=pageforeground,inner sep=0pt, minimum size=1mm},
        dotred/.style={circle,fill=pageforeground!50!pagebackground,inner sep=0pt, minimum size=2mm},
        var/.style={circle,fill=pageforeground!10!pagebackground,draw=pageforeground,inner sep=0pt, minimum size=3mm},
        kernel/.style={semithick,shorten >=2pt,shorten <=2pt},
        kernels/.style={snake=zigzag,shorten >=2pt,shorten <=2pt,segment amplitude=1pt,segment length=4pt,line before snake=2pt,line after snake=5pt,},
        rho/.style={densely dashed,semithick,shorten >=2pt,shorten <=2pt},
           testfcn/.style={dotted,semithick,shorten >=2pt,shorten <=2pt},
        renorm/.style={shape=circle,fill=pagebackground,inner sep=1pt},
        labl/.style={shape=rectangle,fill=pagebackground,inner sep=1pt},
        xic/.style={very thin,circle,draw=symbols,fill=symbols,inner sep=0pt,minimum size=1.2mm},
        g/.style={very thin,rectangle,draw=symbols,fill=symbols!10!pagebackground,inner sep=0pt,minimum width=2.5mm,minimum height=1.2mm},
        xi/.style={very thin,circle,draw=symbols,fill=symbols!10!pagebackground,inner sep=0pt,minimum size=1.2mm},
	xies/.style={very thin,rectangle,fill=green!50!black!25,draw=symbols,inner sep=0pt,minimum size=1.1mm},
	xiesf/.style={very thin,rectangle,fill=green!50!black,draw=symbols,inner sep=0pt,minimum size=1.1mm},
        xix/.style={very thin,crosscircle,fill=symbols!10!pagebackground,draw=symbols,inner sep=0pt,minimum size=1.2mm},
        X/.style={very thin,cross,rectangle,fill=pagebackground,draw=symbols,inner sep=0pt,minimum size=1.2mm},
	xib/.style={thin,circle,fill=symbols!10!pagebackground,draw=symbols,inner sep=0pt,minimum size=1.6mm},
	xie/.style={thin,circle,fill=green!50!black,draw=symbols,inner sep=0pt,minimum size=1.6mm},
	xid/.style={thin,circle,fill=symbols,draw=symbols,inner sep=0pt,minimum size=1.6mm},
	xibx/.style={thin,crosscircle,fill=symbols!10!pagebackground,draw=symbols,inner sep=0pt,minimum size=1.6mm},
	kernels2/.style={very thick,draw=connection,segment length=12pt},
	keps/.style={thin,draw=symbols,->},
	kepspr/.style={thick,draw=connection,->},
	krho/.style={thin,draw=symbols,superdense,->},
	krhopr/.style={thick,draw=connection,superdense},
	triangle/.style = { regular polygon, regular polygon sides=3},
	not/.style={thin,circle,draw=connection,fill=connection,inner sep=0pt,minimum size=0.5mm},
	diff/.style = {very thin,draw=symbols,triangle,fill=red!50!black,inner sep=0pt,minimum size=1.6mm},
	diff1/.style = {very thin,dectriangle={1}{0},fill=red!50!black,draw=symbols,inner sep=0pt,minimum size=1.6mm},
	diff2/.style = {very thin,dectriangle={1}{1},fill=red!50!black,draw=symbols,inner sep=0pt,minimum size=1.6mm},
		diffmini/.style = {very thin,rectangle,fill=black,draw=black,inner sep=0pt,minimum size=0.75mm},
	 kernelsmod/.style={very thick,draw=connection,segment length=12pt},
	 rec/.style = {very thin,rectangle,fill=black,draw=black,inner sep=0pt,minimum size=2mm},
	cerc/.style={very thin,circle,draw=black,fill=symbols,inner sep=0pt,minimum size=2mm},
	stars/.style={very thin,star,star points=6,star point ratio=0.5, draw=black,fill=red,inner sep=0pt,minimum size=0.7mm},
	>=stealth,
        }
        \tikzset{
root/.style={circle,fill=black!50,inner sep=0pt, minimum size=3mm},
        circ/.style={circle,fill=white,draw=black,very thin,inner sep=.5pt, minimum size=1.2mm},
        round1/.style={fill=white,outer sep = 0,inner sep=2pt,rounded corners=1mm,draw,text=black,thin,minimum size=1.2mm},
          circ1/.style={circle,fill=red!10,draw=red,very thin,inner sep=.5pt, minimum size=1.2mm},
        rect/.style={fill=white,outer sep = 0,inner sep=2pt,rectangle,draw,text=black,thin,minimum size=1.2mm},
        rect1/.style={fill=white,outer sep = 0,inner sep=2pt,rectangle,draw,text=black,thin,minimum size=1.2mm},
        round2/.style={fill=red!10,outer sep = 0,inner sep=2pt,rounded corners=1mm,draw,text=black,thin,minimum size=1.2mm},
       round3/.style={fill=blue!10,outer sep = 0,inner sep=2pt,rounded corners=1mm,draw,text=black,thin,minimum size=1.2mm}, 
        rect2/.style={fill=black!10,outer sep = 0,inner sep=2pt,rectangle,draw,text=black,thin,minimum size=1.2mm},
        dot/.style={circle,fill=black,inner sep=0pt, minimum size=1.2mm},
        dotred/.style={circle,fill=black!50,inner sep=0pt, minimum size=2mm},
        var/.style={circle,fill=black!10,draw=black,inner sep=0pt, minimum size=3mm},
        kernel/.style={semithick,shorten >=2pt,shorten <=2pt},
         diag/.style={thin,shorten >=4pt,shorten <=4pt},
        kernel1/.style={thick},
        kernels/.style={snake=zigzag,shorten >=2pt,shorten <=2pt,segment amplitude=1pt,segment length=4pt,line before snake=2pt,line after snake=5pt,},
		kernels1/.style={snake=zigzag,segment amplitude=0.5pt,segment length=2pt},
		rho1/.style={densely dotted,semithick},
        rho/.style={densely dashed,semithick,shorten >=2pt,shorten <=2pt},
           testfcn/.style={dotted,semithick,shorten >=2pt,shorten <=2pt},
           visible/.style={draw, circle, fill, inner sep=0.25ex},
        renorm/.style={shape=circle,fill=white,inner sep=1pt},
        labl/.style={shape=rectangle,fill=white,inner sep=1pt},
        xic/.style={very thin,circle,fill=symbols,draw=black,inner sep=0pt,minimum size=1.2mm},
        xi/.style={very thin,circle,fill=blue!10,draw=black,inner sep=0pt,minimum size=1.2mm},
	xib/.style={very thin,circle,fill=blue!10,draw=black,inner sep=0pt,minimum size=1.6mm},
	xie/.style={very thin,circle,fill=green!50!black,draw=black,inner sep=0pt,minimum size=1mm},
	xid/.style={very thin,circle,fill=symbols,draw=black,inner sep=0pt,minimum size=1.6mm},
	edgetype/.style={very thin,circle,draw=black,inner sep=0pt,minimum size=5mm},
	nodetype/.style={very thick,circle,draw=black,inner sep=0pt,minimum size=5mm},
	kernels2/.style={very thick,draw=connection,segment length=12pt},
clean/.style={thin,circle,fill=black,inner sep=0pt,minimum size=1mm},	not/.style={thin,circle,fill=symbols,draw=connection,fill=connection,inner sep=0pt,minimum size=0.8mm},
	>=stealth,
        }

\makeatletter
\def\DeclareSymbol#1#2#3{%
	\expandafter\gdef\csname MH@symb@#1\endcsname{\tikzsetnextfilename{symbol#1}%
	\tikz[baseline=#2,scale=0.15,draw=symbols,line join=round]{#3}}%
	\expandafter\gdef\csname MH@symb@#1s\endcsname{\scalebox{0.75}{\tikzsetnextfilename{symbol#1}%
	\tikz[baseline=#2,scale=0.15,draw=symbols,line join=round]{#3}}}%
	\expandafter\gdef\csname MH@symb@#1ss\endcsname{\scalebox{0.65}{\tikzsetnextfilename{symbol#1}%
	\tikz[baseline=#2,scale=0.15,draw=symbols,line join=round]{#3}}}%
	}
\def\<#1>{\ifthenelse{\boolean{mmode}}{\mathchoice{\csname MH@symb@#1\endcsname}{\csname MH@symb@#1\endcsname}{\csname MH@symb@#1s\endcsname}{\csname MH@symb@#1ss\endcsname}}{\csname MH@symb@#1\endcsname}}
\makeatother

 \def\1{\mathbf{\symbol{1}}}

\DeclareMathAlphabet{\mathpzc}{OT1}{pzc}{m}{it}

\def\eqref#1{(\ref{#1})}

\makeatletter 
\newcommand*{\bigcdot}{}
\DeclareRobustCommand*{\bigcdot}{%
  \mathbin{\mathpalette\bigcdot@{}}%
}
\newcommand*{\bigcdot@scalefactor}{.5}
\newcommand*{\bigcdot@widthfactor}{1.15}
\newcommand*{\bigcdot@}[2]{%
  \sbox0{$#1\vcenter{}$}
  \sbox2{$#1\cdot\m@th$}%
  \hbox to \bigcdot@widthfactor\wd2{%
    \hfil
    \raise\ht0\hbox{%
      \scalebox{\bigcdot@scalefactor}{%
        \lower\ht0\hbox{$#1\bullet\m@th$}%
      }%
    }%
    \hfil
  }%
}
\makeatother

\def\act{\bigcdot}

\tcbset
{colframe=boxcolor,colback=symbols!7!pagebackground,coltext=pageforeground,
fonttitle=\bfseries,nobeforeafter,center title,size=fbox,boxsep=1.5pt,
top=0mm,bottom=0mm,boxsep=0mm,tcbox raise base}

\def\two{{\<generic>\kern0.05em\<genericb>}}
\def\twoI{{\<Ito>\kern0.05em\<Itob>}}

\def\mail#1{\burlalt{#1}{mailto:#1}}

\usepackage{thmtools}

\definecolor{greenline}{RGB}{60,130,80}
\definecolor{blueconn}{RGB}{0,90,255}
\definecolor{rednote}{RGB}{220,50,47}

\tikzset{
cutatom/.style={
circle,
draw=rednote,
fill=white,
very thick,
minimum size=7pt,
inner sep=0pt
},
atom/.style={
circle,
draw=black,
fill=white,
thick,
minimum size=7pt,
inner sep=0pt
}
}
\begin{document}

\title{Kruskal-style algorithm for Boltzmann equation molecule reduction}

\author{Yvain Bruned, Valentin Clarisse}
\institute{ 
	Universite de Lorraine, CNRS, IECL, F-54000 Nancy, France
	\\
	Email:\ \begin{minipage}[t]{\linewidth}
		\mail{yvain.bruned@univ-lorraine.fr}
		\\
		\mail{valentin.clarisse@univ-lorraine.fr}.
\end{minipage}}

\maketitle 

\begin{abstract}
We are interested in the molecule reduction algorithm used by Deng, Hani and Ma. This algorithm allows them to establish a rigidity theorem, which plays a central role in the long-time derivation of the Boltzmann equation using the hard-sphere dynamics. In the present article, we show that this algorithm is a graph traversal algorithm of Kruskal type, and we prove that it constructs a Kruskal spanning tree of the input molecule.
\end{abstract}

\setcounter{tocdepth}{2}
\tableofcontents
\section{Introduction}

Let $d\geqslant 2$, $\varepsilon>0$, we define the hard-sphere dynamics for $N$ spheres of diameter $\varepsilon$ in dimension $d$. A configuration of the system is given by:
$$
\mathbf{z}_N=(z_1,\ldots,z_N)
=
((x_i,v_i))_{1\leqslant i\leqslant N}
\in\mathcal{D}_N,
$$
where $x_i,v_i\in\mathbb{R}^d$ denote respectively the position and the
velocity of the $i$-th particle, and:
$$
\mathcal{D}_N
=
\left\{
\mathbf{z}_N\in\mathbb{R}^{2dN},
\lvert x_i-x_j\rvert\geqslant\varepsilon
\text{ for every }i\neq j
\right\}.
$$
The hard-sphere dynamics is defined by free transport between collision
times:
$$
\dfrac{\mathrm{d}}{\mathrm{d}t}(x_i,v_i)=(v_i,0).
$$
If two particles $i$ and $j$ collide at time $t$, ie $\lvert x_i(t)-x_j(t)\rvert=\varepsilon$, the velocities after the collision are given by
$$
\begin{cases}
v_i(t^+)
&=
v_i(t)
-
\left((v_i(t)-v_j(t))\cdot\omega\right)\omega,\\
v_j(t^+)
&=
v_j(t)
+
\left((v_i(t)-v_j(t))\cdot\omega\right)\omega,
\end{cases}
$$
where
$$
\omega=\dfrac{x_i(t)-x_j(t)}{\varepsilon}
\in\mathbb{S}^{d-1}.
$$

We denote by $\mathcal{H}_N(t)$ the associated flow. It is
well known that, outside a Lebesgue-negligible subset of $\mathcal{D}_N$,
this flow is well defined for all times and preserves the Lebesgue measure;
see~\cite{A75}. We will ignore this negligible set in the following.
As in~\cite{DHM25a}, we consider random initial configurations in the grand
canonical ensemble. We set
$$
\mathcal{D}
=
\bigcup_{N\in\mathbb{N}}\mathcal{D}_N.
$$
Let $f_0:\mathbb{R}^{2d}\to\mathbb{R}_+$ be a probability density. The
initial density on $\mathcal{D}_N$ is
$$
W_{0,N}(\mathbf{z}_N)
=
\frac{1}{Z}
\varepsilon^{-(d-1)N}
\prod_{i=1}^N f_0(z_i)
\mathds{1}_{\mathcal{D}_N}(\mathbf{z}_N),
$$
where $Z$ is a normalisation constant. Hence, for every measurable set
$A\subset\mathcal{D}_N$,
$$
\mathbf{P}\left(\mathbf{z}(0)\in A\right)
=
\frac{1}{N!}
\int_A
W_{0,N}(\mathbf{z}_N)
\mathrm{d}\mathbf{z}_N
$$
where $\mathbf{z}(0)$ is the random initial data. The factor $\varepsilon^{-(d-1)N}$ corresponds to the Boltzmann-Grad
scaling
$$
\mathbf{E}(N)\varepsilon^{d-1}\simeq 1,
$$
so that the expected number of particles is of order
$\varepsilon^{-(d-1)}$.
For $t\geqslant0$, we write
$$
W_N(t)=\mathcal{S}_N(t)W_{0,N}
$$
for the density transported by the hard-sphere flow, where $\mathcal{S}_N(t)$ is the pullback of $W_{0,N}$ using the flow of $\mathcal{H}_N(t)$. The rescaled
$s$-particle correlation function is defined by
$$
f_s(t,\mathbf{z}_s)
=
\varepsilon^{(d-1)s}
\sum_{n=0}^{+\infty}
\frac{1}{n!}
\int_{\mathbb{R}^{2dn}}
W_{s+n}(t,\mathbf{z}_{s+n})
\mathrm{d}z_{s+1}\cdots\mathrm{d}z_{s+n}.
$$

The effective equation expected in the limit $\varepsilon\to0$ is the
Boltzmann equation. We say that
$f:\mathbb{R}_+\times\mathbb{R}^{2d}\to\mathbb{R}_+$ is a solution to the
hard-sphere Boltzmann equation with initial datum $f_0$ if
$$
\begin{cases}
\displaystyle
(\partial_t+v\cdot\nabla_x)f
=
\int_{\mathbb{R}^d}
\int_{\mathbb{S}^{d-1}}
\left((v-v_1)\cdot\omega\right)_+
\left(
f(x,v')f(x,v_1')
-
f(x,v)f(x,v_1)
\right)
\mathrm{d}\omega\mathrm{d}v_1,
\\
f(0,\cdot)=f_0,
\end{cases}
$$
where
$$
v'
=
v-\left((v-v_1)\cdot\omega\right)\omega
\quad\text{and}\quad
v_1'
=
v_1+\left((v-v_1)\cdot\omega\right)\omega.
$$

The derivation of this equation from the hard-sphere dynamics goes back
to the work of Grad~\cite{G58} and to the Boltzmann-Grad scaling.
The first rigorous results were obtained by Cercignani~\cite{C72}
and, shortly after, by Lanford~\cite{L75}. Lanford proved the
convergence of the hard-sphere dynamics towards the Boltzmann equation,
together with propagation of chaos, on a sufficiently short time
interval. Since then, this result has been refined in several directions;
see for instance~\cite{S91,CIP94,GST14,PSS14}. More recently, fluctuations
around the Boltzmann dynamics and the corresponding large deviations have
also been studied in~\cite{BGSS23}.

The restriction to short times remained the main difficulty. Long-time
results were known in some particular situations, for instance for
perturbations of vacuum~\cite{IP89,D18} or tagged particles close to
equilibrium~\cite{BGSR16}, but the general off-equilibrium problem
remained open.

In~\cite{DHM25a}, Deng, Hani and Ma obtained the long-time derivation of the
Boltzmann equation from the hard-sphere dynamics. More precisely, they
prove convergence on any interval on which the Boltzmann solution stays
sufficiently regular. We recall their main theorem below.
\begin{theorem}[Deng-Hani-Ma]
\label{main_theorem_DHM25}
Let $d\geqslant2$, $\beta>0$, and let
$f_0:\mathbb{R}^{2d}\to\mathbb{R}_+$ be a probability density. Suppose
that the solution $f$ of the Boltzmann equation exists on
$[0,t_{\mathrm{fin}}]$ and satisfies
$$
\sup_{t\in[0,t_{\mathrm{fin}}]}
\left\lVert
e^{2\beta\lvert v\rvert^2}f(t)
\right\rVert_{L^\infty_{x,v}}
\leqslant A<+\infty,
$$
and
$$
\lVert f_0\rVert_{\mathrm{Bol}^{2\beta}}
+
\lVert\nabla_xf_0\rVert_{\mathrm{Bol}^{2\beta}}
\leqslant B_0<+\infty,
$$
where
$$
\lVert f\rVert_{\mathrm{Bol}^{\beta}}
=
\sum_{k\in\mathbf{Z}^d}
\sup_{\substack{\lvert x-k\rvert\leqslant1\\v\in\mathbb{R}^d}}
e^{\beta\lvert v\rvert^2}\lvert f(x,v)\rvert.
$$

Consider the hard-sphere system with diameter $\varepsilon$, random initial
data given by the grand canonical ensemble above and under the
Boltzmann-Grad scaling. For $\varepsilon$ small enough depending on
$d,t_{\mathrm{fin}},\beta,A$ and $B_0$, there exists
$\theta=\theta(d)>0$ such that, uniformly for
$t\in[0,t_{\mathrm{fin}}]$ and $s\leqslant\lvert\log\varepsilon\rvert$,
$$
\left\lVert
f_s(t,\mathbf{z}_s)
-
\prod_{j=1}^s
f(t,z_j)
\mathds{1}_{\mathcal{D}_s}(\mathbf{z}_s)
\right\rVert_{L^1(\mathbb{R}^{2ds})}
\leqslant
\varepsilon^\theta.
$$
\end{theorem}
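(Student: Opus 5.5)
The plan follows the strategy of Deng, Hani and Ma. The starting point is the BBGKY-type hierarchy satisfied by the correlation functions $f_s$, iterated in Duhamel form over a time decomposition of $[0,t_{\mathrm{fin}}]$ into short layers of length $\tau$. Here $\tau$ is chosen small depending on $d,\beta,A,B_0$, so that each layer only produces collision trees of bounded size (say at most $\Lambda=|\log\varepsilon|^{C}$ collisions) up to a small error. After the iteration, $f_s(t)$ is written as a sum over \emph{molecules}: layered graphs whose atoms are collisions or overlaps, and whose edges are particle trajectories between them. The first step is to separate the tree-like molecules from the rest. The tree-like (cycle-free) molecules reproduce the Duhamel expansion of the Boltzmann hierarchy, and by uniqueness for that hierarchy under the stated Gaussian weight bound $A$ this gives $\prod_j f(t,z_j)$. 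The remaining molecules, those containing recollisions or cycles, form the error to be shown $\leqslant\varepsilon^{\theta}$.

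The second step is to estimate the integral attached to each molecule with cycles. Each collision contributes a factor $\varepsilon^{d-1}$ times a cross-section integral. The Boltzmann-Grad scaling exactly compensates this for tree edges, so each independent cycle should yield a gain of a power of $\varepsilon$, but only if the corresponding geometric constraint is nondegenerate. Concretely:
\begin{itemize}
\item I will integrate the atoms one by one, following a well-chosen ordering.
\item At each step I bound the measure of the collision parameters $(x,v,\omega)$ of the current atom, given the atoms already integrated.
\item The elementary integrals are for atoms of degree $2$, $3$ or $4$. Degree-$3$ atoms are free (no gain, no loss); degree-$4$ atoms closing a cycle give a gain of $\varepsilon^{1-}$ (more precisely $\varepsilon|\log\varepsilon|^C$); a bounded number of special configurations are degenerate.
\end{itemize}
This is where the molecule reduction algorithm, identified in this paper as a Kruskal-type traversal, enters. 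Taking the reduction to build a spanning tree in the Kruskal order, each non-tree edge is charged to exactly one cycle gain. The key combinatorial claim to establish is the rigidity statement: the number of ``good'' (gain-producing) cut steps is at least a fixed fraction of the cycle number minus the number of ``bad'' components.

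The third step is the summation. The number of molecules with $n$ atoms is at most $C^n n!$-type, and the $n!$ must be absorbed by the time-ordering factor $\tau^n/n!$ within each layer. Summing the gains then gives
$$\sum_{\text{cyclic molecules}}\lesssim\sum_{\rho\geqslant1}(C\varepsilon^{1/2})^{\rho}\,(C\tau)^{n}\leqslant\varepsilon^{\theta}.$$
Large trees and the truncation at $s\leqslant|\log\varepsilon|$ are handled by a cutoff argument using the $\mathrm{Bol}^{2\beta}$ bounds on $f_0$ and $\nabla_x f_0$. Gluing the layers uses the $L^\infty$ weighted bound $A$ propagated along the Boltzmann solution, so that errors accumulate only over $t_{\mathrm{fin}}/\tau$ layers.

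I expect the main obstacle to be the rigidity step. The difficulty is showing that, for an arbitrary layered molecule, the reduction algorithm can always find enough gain-producing cuts, with degenerate configurations controlled by a number proportional to the number of cycles. This is the global combinatorial input. Everything else consists of local geometric estimates or standard Lanford-type bookkeeping. I would attack it exactly as this paper sets up: view the reduction as Kruskal's algorithm on the molecule with layer-induced weights, prove that it outputs a spanning tree, and count good versus bad steps via the exchange property of spanning trees.
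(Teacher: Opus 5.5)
You should know that the paper does not prove this theorem: it quotes it from \cite{DHM25a} and only sketches the strategy. Measured against that sketch, your outline has a genuine gap in the step you treat as bookkeeping, namely the passage from one layer to the next. The sketch stresses that at time $\ell\tau$ the particles are already correlated, and that a direct iteration of the one-layer estimates is not sufficient. Your ``gluing via the bound $A$'' is exactly such an iteration, and it fails in either reading.
\begin{itemize}
\item \emph{If you expand the whole history back to $t=0$.} The absolute weight of the layered expansion grows exponentially in the number of particles entering a layer, and that number itself grows from layer to layer, up to $|\log\varepsilon|^{C}$. Your bound $\sum_{\rho}(C\varepsilon^{1/2})^{\rho}(C\tau)^{n}$ is then not available beyond Lanford's time. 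A gain per cycle cannot absorb this growth, because it is carried by the non-small tree parts attached to the cycles.
\item \emph{If you restart at $\ell\tau$.} The new expansion needs the whole family $f_{s+n}(\ell\tau)$, $n\lesssim|\log\varepsilon|^{C}$, traced on $\varepsilon$-collision surfaces. The bound $A$ controls $f$, not the correlated deviation $f_{s+n}(\ell\tau)-f(\ell\tau)^{\otimes(s+n)}$. An $L^1$ bound $\varepsilon^{\theta}$ on that deviation is not a quantity the next layer's collision operators can use.
\end{itemize}
The missing idea is the one in the sketch. You need the cumulant ansatz $f_s=\sum_H (f^A)^{\otimes(\{1,\ldots,s\}\setminus H)}E_H$, with bounds $\lVert E_H(\ell\tau)\rVert_{L^1}\leqslant\varepsilon^{\alpha|H|}$ propagated from layer to layer. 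A gain proportional to the size of the correlated cluster is exactly what beats the exponential growth. You also need a separate comparison of $f^A$ with $f$. Finally, you need the partial time expansion: the factorised part is never expanded, only cumulants are, and this produces the multi-layer molecules of $\mathcal F$. The rigidity argument must then extract a gain $\gtrsim|H|$ from the structure of $\mathcal F$: every component of a layer contains a root line, layers are almost forests, and each root line is connected to another root line or to the previous layer.

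Your plan for the rigidity step also has no traction. Adding, at each cut, a maximal acyclic set of the removed bonds is the greedy algorithm on the graphic matroid, so it outputs a spanning forest for \emph{every} cutting order. The good/bad accounting is not a function of that forest. If $M$ is itself a tree, every cutting sequence returns $M$. Yet cutting two adjacent degree-$3$ atoms jointly, as a $\{33\}$ (the typical source of gain), or one after the other, as a $\{3\}$ then a $\{2\}$ (both normal), gives different counts. So no exchange argument on spanning trees can count good versus bad steps. The Kruskal description proved in this paper is a structural reading of the algorithm of \cite{DHM25a}, not an input to its estimates; the paper itself notes that UP is Prim-like and that toy model II has no priority order.

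Your elementary bookkeeping is also reversed relative to the classification used here. Degree counts bonds plus free ends; $\{2\}$ and $\{3\}$ are normal, and $\{4\}$ is in general bad, because of wasted integration dimensions. Gains come mainly from good submolecules such as $\{33\}$ or $\{44\}$, so the degree-$4$ atoms you credit with a gain are typically the bad ones. The real content of the rigidity theorem is that specific choices, applied after the reduction to two layers, create enough good cuts to beat the bad ones while still leaving a gain proportional to $|H|$. These choices are: lowest or highest atoms first, priority to degree-$3$ atoms, restoring properness of $M_D$, and exploiting $2$-connections or the components of $X$. This is established by the case analysis over toy models I, I+, II and III, not by charging each non-tree edge to a cycle.
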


This result extends the derivation beyond the short time appearing in
Lanford's theorem. An account of the result and of the main ideas of its
proof can be found in the Bourbaki seminar~\cite{BGSS26}; see also the
expository notes~\cite{DEx1,DEx2}. The method was subsequently extended
in~\cite{DHM25b}, where Deng, Hani and Ma derive fundamental equations of
fluid mechanics from hard-sphere dynamics through Boltzmann's kinetic
theory.

Several ideas used in~\cite{DHM25a} come from the works of Deng and Hani
on wave kinetic equations~\cite{DH23a,DH23b,DH26}, where similar
combinatorial difficulties appear when one tries to reach the kinetic
timescale. Related recent developments include the one-dimensional wave
kinetic theory studied in~\cite{V25} and the derivation of the wave kinetic
equation for the $\beta$-FPUT system in~\cite{VW26}.

Let us summarise the main ideas of the proof of
Theorem~\ref{main_theorem_DHM25}. The first step is to divide the time
interval into layers
$$
[0,t_{\mathrm{fin}}]
=
\bigcup_{\ell=1}^{L}
[(\ell-1)\tau,\ell\tau],
\qquad
\tau=\frac{t_{\mathrm{fin}}}{L},
$$
with $\tau$ sufficiently small. On each time layer, one can use a local
expansion. The problem is then to propagate the estimates from one layer
to the next. At time $\ell\tau$, the particles are no longer independent,
as correlations have been generated by the previous collisions.

These correlations are described using cumulants. Following the cluster
expansion approach for hard-sphere systems~\cite{PS17,BGSS22}, one writes
$$
f_s(t,\mathbf{z}_s)
=
\prod_{j=1}^s f^A(t,z_j)
+
\sum_{\emptyset\neq H\subset\{1,\ldots,s\}}
\left(
\prod_{j\in\{1,\ldots,s\}\setminus H}f^A(t,z_j)
\right)
E_H(t,\mathbf{z}_H),
$$
where $f^A$ is the approximately factorized contribution and $E_H$ are the
cumulants. The aim is then to prove that $f^A$ converges towards the
solution of the Boltzmann equation and that the cumulants remain small,
with estimates of the form
$$
\lVert E_H(\ell\tau)\rVert_{L^1}
\leqslant
\varepsilon^{\alpha\lvert H\rvert}
$$
for some $\alpha>0$.

A direct iteration of these estimates from one layer to the next is not
sufficient. The authors introduce instead a partial time expansion: the
approximately factorized terms are not expanded, whereas the cumulant
terms are recursively expanded. The resulting terms encode the collision
histories responsible for the correlations.

These collision histories can be represented by decorated graphs called
\emph{molecules}. The edges describe pieces of particle trajectories and
the vertices, called \emph{atoms}, represent collisions. The layer of an
atom records the time interval in which the corresponding collision takes
place. We recall these objects and their analytical interpretation in
Section~\ref{Sec::2}. The cumulants are reduced to estimates of the form
$$
\lvert E_H(\ell\tau)\rvert
\lesssim
\sum_M
\lvert\mathcal{IN}_M\rvert,
$$
where $\lvert\mathcal{IN}_M\rvert$ is the normalised contribution
associated with a molecule $M$. The main problem is then to estimate
$\lvert\mathcal{IN}_M\rvert$.
In~\cite{DHM25a}, this quantity is rewritten in terms of an integral
operator $I_M$. The choice of the order of integration is encoded through
a cutting procedure on the molecule. Each cutting step removes some atoms
and bonds, and the procedure is iterated until the molecule is decomposed
into elementary submolecules.

The elementary submolecules are classified according to the analytical
estimate they provide. In particular, $\{3\}$ molecules are normal,
$\{33\}$ molecules are good and $\{4\}$ molecules are bad. Good molecules
give an additional gain in $\varepsilon$, whereas bad molecules produce
a loss. The goal of the cutting algorithm is therefore to produce enough
good molecules while keeping the number of bad molecules small. See Table \ref{tab:submolecule-classification} for the classification of elementary submolecules.

The algorithm introduced in~\cite{DHM25a} is rather sophisticated. A
multi-layer molecule is first reduced to a two-layer molecule. The
two-layer case is then treated by several algorithms, among which
UP and the toy models I, I+, II and III.
The succession of these procedures is determined by the degrees of the
atoms and by the geometry of the molecule. Here is an example of molecule reduction using part of the UP algorithm:
\begin{figure}[H]
\centering
\begin{tikzpicture}[
    line cap=round,
    line join=round,
    edge/.style={black,thick},
    rededge/.style={rednote,thick},
    vertex/.style={
        circle,
        draw=black,
        fill=white,
        inner sep=1.2pt,
        line width=0.8pt
    },
    redvertex/.style={
        circle,
        draw=rednote,
        fill=white,
        inner sep=1.2pt,
        line width=0.8pt
    }
]

\begin{scope}[xshift=0cm]
\coordinate (BL) at (-1,-1);
\coordinate (L)  at (0,0);
\coordinate (T)  at (1,1);
\coordinate (R)  at (2,0);
\coordinate (B)  at (1,-1);

\draw[edge] (L)--(T)--(R)--(B)--(L);
\draw[edge] (L)--(BL);

\draw[edge] (BL) -- ++(-0.18, 0.18);
\draw[edge] (BL) -- ++(-0.18,-0.18);
\draw[edge] (BL) -- ++( 0.18,-0.18);

\draw[edge] (L) -- ++(-0.18, 0.18);

\draw[edge] (T) -- ++(-0.18, 0.18);
\draw[edge] (T) -- ++( 0.18, 0.18);

\draw[edge] (R) -- ++( 0.18,-0.18);
\draw[edge] (R) -- ++( 0.18, 0.18);

\draw[edge] (B) -- ++(-0.18,-0.18);
\draw[edge] (B) -- ++( 0.18,-0.18);

\foreach \P in {BL,L,T,R,B}{
    \node[vertex] at (\P) {};
}

\node[below left=4pt] at (BL) {$\mathfrak{n}_1$};
\node[left=4pt]       at (L)  {$\mathfrak{n}_2$};
\node[above left=4pt] at (T)  {$\mathfrak{n}_3$};
\node[right=4pt]      at (R)  {$\mathfrak{n}_5$};
\node[below=4pt]      at (B)  {$\mathfrak{n}_6$};
\end{scope}

\begin{scope}[xshift=6.8cm]
\coordinate (BL) at (-1,-1);
\coordinate (L)  at (0,0);
\coordinate (T)  at (1,1);
\coordinate (TR) at (2,2);
\coordinate (R)  at (2,0);
\coordinate (B)  at (1,-1);

\draw[edge]    (L)--(T)--(R)--(B)--(L);
\draw[edge]    (L)--(BL);
\draw[rededge] (T)--(TR);

\draw[edge] (BL) -- ++(-0.18, 0.18);
\draw[edge] (BL) -- ++(-0.18,-0.18);
\draw[edge] (BL) -- ++( 0.18,-0.18);

\draw[edge] (L) -- ++(-0.18, 0.18);

\draw[edge] (T) -- ++(-0.18, 0.18);

\draw[rededge] (TR) -- ++(-0.18, 0.18);
\draw[rededge] (TR) -- ++( 0.18,-0.18);
\draw[rededge] (TR) -- ++( 0.18, 0.18);
\node[font=\scriptsize,text=rednote] at (2.24,1.82) {$\times$};

\draw[edge] (R) -- ++( 0.18,-0.18);
\draw[edge] (R) -- ++( 0.18, 0.18);

\draw[edge] (B) -- ++(-0.18,-0.18);
\draw[edge] (B) -- ++( 0.18,-0.18);

\node[vertex]    at (BL) {};
\node[vertex]    at (L)  {};
\node[vertex]    at (T)  {};
\node[redvertex] at (TR) {};
\node[vertex]    at (R)  {};
\node[vertex]    at (B)  {};

\node[below left=4pt]  at (BL) {$\mathfrak{n}_1$};
\node[left=4pt]        at (L)  {$\mathfrak{n}_2$};
\node[above left=4pt]  at (T)  {$\mathfrak{n}_3$};
\node[above right=4pt] at (TR) {$\mathfrak{n}_4$};
\node[right=4pt]       at (R)  {$\mathfrak{n}_5$};
\node[below=4pt]       at (B)  {$\mathfrak{n}_6$};
\end{scope}

\end{tikzpicture}
\caption{The atom $\mathfrak{n}_4$ has one bottom fixed end denoted by the symbol $\times$. In the UP algorithm, we choose  $\mathfrak{n}=\mathfrak{n}_4$ and $S_{\mathfrak{n}}=\{\mathfrak{n}_4,\mathfrak{n}_3,\mathfrak{n}_2,\mathfrak{n}_5,\mathfrak{n}_1,\mathfrak{n}_6\}$, cutting $\mathfrak{m}=\mathfrak{n}_4$.}
\end{figure}
The point is that the UP algorithm has two orderings:
\begin{itemize}
\item globally, choose the lowest degree-$3$ atom $\mathfrak{n}$;
\item once $\mathfrak{n}$ is chosen, work inside its descendant set $S_\mathfrak{n}$, but cut that set from top to bottom.
\end{itemize}

Hence, in the particular case of the UP algorithm, we build a spanning tree of the molecule using a local Prim-like algorithm.

A similar situation appears in the derivation of the wave kinetic
equation. In~\cite{BC26}, the molecule reduction algorithm of Deng and
Hani (see~\cite{DH23a}) was interpreted as a graph traversal algorithm of
Kruskal type. More precisely, the reduction algorithm was shown to
construct a spanning tree of the input molecule by keeping, at every step,
a maximal number of edges without creating cycles.

The use of trees and forests to organise perturbative expansions also has
a long history in renormalisation and constructive quantum field theory;
see for instance~\cite{Z69,R91,RW14}. In particular, spanning trees can be
used to reorganise families of Feynman graphs, a point of view which is
especially explicit in~\cite{RW14}.
We consider a two-layer molecule
$$
M=M_U\cup M_D,
$$
where $M_U$ and $M_D$ denote respectively the upper and lower layers (these notions will be precisely defined below in Section \ref{Sec::3}). We define:
$$
X=\left\{
\mathfrak{n}\in M_U:
\mathfrak{n}\text{ is connected to some atom of }M_D
\right\}.
$$
Thus, $X$ consists of the upper-layer atoms that are connected to the lower layer. The analysis is based on the following four toy models that depend on the structure of $X$
\begin{itemize}
    \item \textbf{Toy model I}: no $2$-connections, in the simplest configuration.

    \item \textbf{Toy model I+}: no $2$-connections and $X$ has few connected components.

    \item \textbf{Toy model II}: $X$ has many connected components.

    \item \textbf{Toy model III}: there are many $2$-connections.
\end{itemize}
A $2$-connection is a bond joining two degree-$2$ atoms,
one in $M_U$ and one in $M_D$. 
The aim of the present article is to characterise the cutting algorithm
of~\cite{DHM25a} via a spanning tree. Our main theorem is the following.

\begin{theorem}
\label{main_theorem}
For all toy models, the cutting algorithm introduced in~\cite{DHM25a} builds a spanning tree
of the input molecule in a Kruskal manner.
\end{theorem}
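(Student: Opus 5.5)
We will follow the strategy of \cite{BC26}. The first step is to formalise what ``building a spanning tree in a Kruskal manner'' means for a cutting procedure. Each elementary cutting step of \cite{DHM25a} removes a set of atoms $A$ from the current molecule $M$. It produces an elementary submolecule ($\{3\}$, $\{33\}$, $\{4\}$, or degenerate ones of degree $\leq 2$). To each such step we associate a set of \emph{selected} bonds inside the cut piece. These are bonds joining atoms of $A$, which for a $\{33\}$ molecule is the bond between its two atoms, and for a single atom is empty. We also associate bonds joining $A$ to atoms already processed. The union $T$ of all selected bonds over the whole run is the candidate tree.

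The claim then splits into three invariants, to be checked step by step. (i) Acyclicity: at every step, the selected bonds together with the previously selected ones form a forest. (ii) Maximality (the Kruskal property): at every step, the current selected set is a maximal acyclic subset of the bonds examined so far. Equivalently, a bond is discarded only if its endpoints already lie in the same component of the current forest, as in Kruskal's rule with the order given by the cutting order. (iii) Spanning: when the algorithm terminates with all atoms cut, $T$ touches every atom of each connected component of $M$.

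We will encode the edge order as follows. Bonds are listed in the order in which the cut first makes them ``free ends''. A bond is accepted exactly when it merges two distinct components of the forest, which is the union--find formulation of Kruskal's algorithm. A preliminary lemma then reduces everything to a local statement: if every elementary cut adds edges that merge distinct components and rejects only edges closing cycles, then the final set is a Kruskal spanning forest. This is a purely graph-theoretic fact, proved by induction on the number of steps.

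Next we treat each procedure in turn.

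\textbf{Reduction to two layers.} The multi-layer reduction cuts layer by layer, so it is enough to show that the order within each two-layer block is compatible with a global Kruskal order. For this we concatenate the orders.

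\textbf{UP.} The global choice of the lowest degree-$3$ atom, followed by a top-to-bottom sweep of $S_\mathfrak{n}$, is a local Prim-type exploration. Prim and Kruskal agree in producing some spanning forest, but we must recast Prim as Kruskal. We do this by ordering the bonds of $S_\mathfrak{n}$ by their cut time and checking that the accepted bonds are exactly those linking the newly cut atom to its not-yet-connected neighbours.

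\textbf{Toy models I and I+.} With no $2$-connections, the cuts proceed through $X$ and its components. Here the key point is that each connected component of $X$ contributes a subtree, and bonds to $M_D$ join these subtrees.

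\textbf{Toy model II.} The many components of $X$ make the forest initially disconnected. We will check that bonds are accepted in the order components merge.

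\textbf{Toy model III.} Each $2$-connection bond is cut as part of a $\{33\}$ pair, so it is always accepted, and we must check that it never closes a cycle.

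We expect the main obstacle to be invariant (ii) for toy models II and III. In these cases the algorithm of \cite{DHM25a} makes nonlocal choices: it selects components of $X$, and it decides whether to cut a degree-$2$ pair together or separately. We must show that no bond rejected there could have been added without creating a cycle. Our first attempt will be a degree-counting argument. When an atom is cut with degree $d$ in the current molecule, exactly the bonds to its distinct neighbouring components are accepted and the remaining $d-\#\{\text{components}\}$ bonds close cycles. This matches the classification in Table~\ref{tab:submolecule-classification}: bad $\{4\}$ molecules correspond precisely to rejected, cycle-closing bonds. If this bookkeeping goes through for each case, combining it with the preliminary lemma proves Theorem~\ref{main_theorem}.
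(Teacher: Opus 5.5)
Your architecture matches the paper's. Proposition~\ref{main_proposition} is exactly your local statement: at each cutting step, add to an acyclic graph $G$ as many of the bonds released by that step as possible without creating a cycle. Theorem~\ref{main_theorem} then follows by induction over the steps, and $G$ contains all atoms once the algorithm has cut everything.

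Where you go wrong is in locating the difficulty. The tree is not read off from the shape of the cuts. The greedy (union--find) rule is \emph{imposed} at every step, so your invariants (i) and (ii) hold by definition for every toy model, including the UP and DOWN sub-algorithms, to which the same rule is applied. The paper's step-by-step review of Definitions 11.4, 11.13, 11.17, 11.21 only records which atoms and bonds each step releases. There is therefore no obstacle specific to toy models II and III.

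What your preliminary lemma does need, and you do not state, is that every bond is examined exactly once. This holds because the algorithm terminates with all atoms cut. Together with maximality, it gives connectedness of $G$ on each component of $M$; ``touches every atom'' in (iii) is not enough.

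Several concrete claims in your plan would fail and should be dropped.

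First, your bookkeeping mixes two conventions. Your selected bonds (internal bonds of $A$ plus bonds from $A$ to already processed atoms) are examined when their \emph{later} endpoint is cut. Your ordering (``when the cut first makes them free ends'') and your degree count instead use the bonds from $A$ to the remaining molecule, i.e.\ the \emph{earlier} endpoint, which is the paper's convention. In that convention the internal bond of a $\{33\}$ pair is not automatically accepted. In the UP example, when $\{\mathfrak{n}_5,\mathfrak{n}_6\}$ is cut, $\mathfrak{n}_5$ and $\mathfrak{n}_6$ are already joined in $G$ through the bonds $\mathfrak{n}_3\mathfrak{n}_5$, $\mathfrak{n}_2\mathfrak{n}_3$, $\mathfrak{n}_2\mathfrak{n}_6$. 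So $\mathfrak{n}_5\mathfrak{n}_6$ closes a cycle and must be rejected. Hence ``a $2$-connection bond is always accepted and never closes a cycle'' is not provable in general. Moreover, as the algorithm is stated, step (1) of toy model III cuts degree-$2$ atoms of $M_D$ singly, so nothing places $2$-connections inside $\{33\}$ pairs.

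Second, the degree count is false. The degree includes free ends, which are not bonds. Also, a bad $\{4\}$ cut need not reject anything: the first cut in the toy model I example is the $\{4\}$-atom $u_1$, made when $G$ has no edges. Only an aggregate Euler-characteristic relation links cut types to the cycle rank, and the theorem does not use it.

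Finally, the multi-layer reduction lies outside the statement, which concerns the two-layer toy models only.
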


The proof of Theorem~\ref{main_theorem} is based on an analysis of all the
steps of the cutting algorithm for all the toy models. At every step, we keep a maximum number of
the bonds removed by the cut, under the condition that the graph under
construction remains acyclic. The local statement used in the proof is
the following.

\begin{proposition}
\label{main_proposition}
We suppose given a step of the cutting algorithm described
in~\cite{DHM25a} on a molecule $M$ and an acyclic graph $G=(V,E)$, where
$V$ contains the atoms of $M$ and $E$ is a set of edges disjoint from the
bonds of $M$. Then one can add to $G$ a maximum number of edges removed by
the cutting step under consideration such that the new graph obtained is
still acyclic.
\end{proposition}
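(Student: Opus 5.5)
The proposition is essentially a statement about graphic matroids, and I would prove it in that abstract setting before specialising to the cutting steps of~\cite{DHM25a}. Let $F$ be the finite set of bonds removed by the cutting step under consideration. These are the bonds joining the cut set of atoms to the rest of $M$, together with the bonds internal to the cut elementary submolecule. Every element of $F$ has both endpoints among the atoms of $M$. Free or fixed ends are not joined to another atom, so I treat them either as not contributing edges or as pendant edges to fresh vertices; a pendant edge never closes a cycle. Since $E$ is disjoint from the bonds of $M$, the multigraph $(V,E\cup F)$ is well defined. The question is therefore to find a subset $F'\subset F$ of maximal cardinality such that $(V,E\cup F')$ is acyclic.

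The key step is the exchange property of forests. Contract each connected component of $G=(V,E)$ to a single vertex, obtaining a multigraph $G/E$ whose edge set is $F$. A set $F'\subset F$ keeps $E\cup F'$ acyclic if and only if $F'$ is acyclic in $G/E$. An edge joining two atoms in the same component of $G$ becomes a loop, and no acyclic extension can contain it.

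Existence of a maximum now follows from finiteness. The family of admissible $F'$ is nonempty, since it contains $\emptyset$, and it is finite. I would go further and show that the greedy Kruskal procedure achieves the maximum. Process the bonds of $F$ in the order in which the cutting step removes them, and add each one whenever it joins two distinct components of the current forest, for instance using a union--find structure. By the matroid augmentation property, every maximal acyclic subset of $F$ in $G/E$ is a spanning forest of $G/E$ restricted to $F$. Its size is therefore
\[
\#\{\text{components of } (V,E)\}-\#\{\text{components of } (V,E\cup F)\},
\]
which is independent of the choices made. So "maximal" and "maximum" coincide, and this is exactly the Kruskal property required for Theorem~\ref{main_theorem}.

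The main obstacle is not the matroid argument but matching it to each concrete cutting step of~\cite{DHM25a}. These are the UP steps, the steps of toy models I, I+, II and III, and the cuts producing $\{3\}$, $\{33\}$ and $\{4\}$ submolecules. For each of them one has to identify precisely which bonds are removed, whether they count as edges between atoms or as free or fixed ends, and check that they satisfy the disjointness hypothesis. For this I would go through the step types one by one.

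I would also compute explicitly the number of kept bonds in each case, for example $\deg-1$ for a single atom attached to one component. This makes the count match the "maximal number of edges" used later in the analysis. If some step removes two bonds between the same pair of atoms, as can happen in the $\{33\}$ and $2$-connection cases, I would note explicitly that at most one of them can be kept. This is consistent with the formula above, since the second bond is a loop in $G/E$ once the first has been added.
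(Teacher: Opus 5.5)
Your proof is correct, but it follows a different route from the paper. The paper proves the proposition by walking through the cutting sequences of \cite{DHM25a} one step at a time: toy models I, I+, II, III and the UP/DOWN subroutines. At each step it simply records that the cut atoms are added to $G$ together with ``the maximum amount of bonds linked to them such that there is no cycle created''. It never argues why this maximum exists or what its size is.

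Your contraction argument in the graphic matroid shows that the statement does not depend on the particular step. The only input is that the removed set $F$ is a finite set of edges between vertices of $V$, with ends acting as harmless pendant edges. From this you get three things:
\begin{itemize}
\item maximal and maximum acyclic extensions coincide;
\item any greedy order attains the maximum, in particular the priority order used in the paper;
\item the number of kept bonds is $c(V,E)-c(V,E\cup F)$, where $c$ counts connected components.
\end{itemize}
So the step-by-step identification you single out as the main obstacle is not needed for this proposition at all.

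What the paper's case analysis supplies instead is \emph{which} bonds are kept at each step, i.e.\ the actual tree and its Kruskal priority. Your argument deliberately leaves this open.

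Conversely, your framework gives cleanly the passage to Theorem~\ref{main_theorem}, which the paper only sketches. Iterating your count shows that after every step $E$ has the same connected components as the union of all bonds removed so far. Since every bond is removed at some step, at termination $E$ is a spanning forest of the initial molecule, and a spanning tree when the molecule is connected.

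Your remark on parallel bonds is harmless but unnecessary, since the contraction handles parallel edges automatically. You should also not assert, without checking, that such double bonds actually occur in the $\{33\}$ or $2$-connection cases.
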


We prove Proposition~\ref{main_proposition} in Section~\ref{Sec::3} by reviewing
the different steps of the algorithms introduced in~\cite{DHM25a}. The
proof of Theorem~\ref{main_theorem} then follows from an induction on the
construction of the acyclic graph. When the cutting algorithm terminates,
this graph contains all the atoms and becomes a spanning tree of the
initial molecule.
We call this construction Kruskal-like because, as in Kruskal's
algorithm~\cite{K56}, one keeps edges as long as they do not create
a cycle, selecting them with respect to some weight priority.

The paper is organised as follows. In Section~\ref{Sec::2}, we recall the definitions
of molecules used in~\cite{DHM25a}. In Section~\ref{Sec::3}, we study the different
steps of the cutting algorithm and prove
Proposition~\ref{main_proposition}. We then prove
Theorem~\ref{main_theorem} and describe the spanning tree obtained from
the cutting procedure.

\subsection*{Acknowledgements}
{\small
Y.B. and V.C. gratefully acknowledge funding support from the European Research Council
(ERC) through the ERC Starting Grant Low Regularity Dynamics via Decorated Trees
(LoRDeT), grant agreement No. 101075208. Views and opinions expressed are however
those of the author(s) only and do not necessarily reflect those of the European Union or the
European Research Council. Neither the European Union nor the granting authority can be
held responsible for them. Y.B. also thanks the ”Institut des Hautes Etudes Scientifiques”
(IHES) for a long research stay from 7th of January to 21st of March 2025, where the main
idea for this work emerged. Y. B. gratefully acknowledges Thierry Bodineau for interesting
discussions on the Boltzmann kinetic equation and for suggesting to apply for a long stay at
IHES. Y. B. thanks José Bruned for interesting discussions on mechanical systems where
one uses similar graphical tools. Y. B. thanks Ismaël Bailleul for pointing out the use of
spanning trees in Quantum Field Theory.}

\section{Molecules and cumulants}
\label{Sec::2}
\subsection{Molecules}
\begin{definition}[Molecule]
A \emph{molecule} is a quadruple
$$
M=(\mathcal{M},\mathcal{E},\mathcal{P},\mathcal{L}),
$$
where:
\begin{enumerate}
\item $\mathcal{M}$ is a finite set of nodes, called \emph{atoms}. Each atom
$\mathfrak{n}\in\mathcal{M}$ is assigned
a layer $\ell[\mathfrak{n}]\in[\underline{\ell}:\ell]\cap\mathbb{N}$. We write
$$
M_{\ell'}
=
\{\mathfrak{n}\in\mathcal{M}:\ell[\mathfrak{n}]=\ell'\}.
$$

\item $\mathcal{E}$ is a finite set of \emph{edges}. Each edge is one of the following:
\begin{enumerate}
\item a \emph{bond}, joining two atoms $\mathfrak{n},\mathfrak{n}'\in\mathcal{M}$;
\item an \emph{end}, incident to exactly one atom $\mathfrak{n}\in\mathcal{M}$;
\item an \emph{empty end}, incident to no atom.
\end{enumerate}
Each edge incident to an atom is declared to be either a \emph{top edge} or a
\emph{bottom edge} at that atom. Each end is declared to be either \emph{free} or
\emph{fixed}; empty ends are, by convention, free and count as both top and bottom.

\item $\mathcal{P}$ is a collection of paths, called \emph{particle lines}. Each
particle line $\mathbf{p}$ starts from a bottom end at a layer $\ell_1(\mathbf{p})$, follows bonds through atoms by moving from
child to parent, and terminates at a top end at a layer $\ell_2(\mathbf{p})$. Every edge of $\mathcal{E}$ belongs to
exactly one particle line. In particular, every empty end is regarded as a particle
line. A particle line is called a root if it ends at a top edge.

\item $\mathcal{L}$ is a collection of \emph{initial links}, namely pairs
$(e,e')$ of bottom ends satisfying $\ell_1[e]=\ell_1[e']=\underline{\ell}$.

\end{enumerate}
The following admissibility conditions are imposed:
\begin{enumerate}
\item every atom has exactly two top edges and exactly two bottom edges;
\item the parent-child relation contains no directed cycle;
\item if $\mathfrak{n}$ is a parent of $\mathfrak{n}'$, then
$\ell[\mathfrak{n}]\geq \ell[\mathfrak{n}']$;
\item if an atom $\mathfrak{n}$ belongs to a particle line $p$, then
$\ell_1[p]\leq \ell[\mathfrak{n}]\leq \ell_2[p]$.
\end{enumerate}
Moreover, for all $\mathfrak{n}\in \mathcal{M}$, we denote $S_\mathfrak{n}\subset\mathcal{M}$ the set of successors of $\mathfrak{n}$ and $Z_\mathfrak{n}\subset\mathcal{M}$ the set of ancestors of $\mathfrak{n}$, for the order given by the edges. The degree of an atom is the number of its bonds plus free ends (not counting fixed ends).
\end{definition}
\begin{figure}[h]
\centering
\begin{tikzpicture}[scale=0.7]

\definecolor{pzero}{RGB}{220,50,47}
\definecolor{ptwo}{RGB}{133,153,0}
\definecolor{pthree}{RGB}{42,161,152}
\definecolor{greenline}{RGB}{60,130,80}

\def\dx{1.3}
\def\slope{0.35}
\def\gap{2.05}

\coordinate (d2) at (1*\dx,1*\slope);
\coordinate (u1) at (0*\dx,\gap + 0*\slope);

\coordinate (d2ll) at ($(d2)+(-0.618,-0.166)$);
\coordinate (d2ur) at ($(d2)+(0.618,0.166)$);
\coordinate (d2down) at ($(d2)+(0,-0.64)$);

\coordinate (u1ulow) at ($(u1)+(-0.618,-0.166)$);
\coordinate (u1ur) at ($(u1)+(0.618,0.166)$);
\coordinate (u1up) at ($(u1)+(0,0.64)$);

\draw[greenline, thick, dashed] (-0.6,1.45) -- (2.0,1.45);

\draw[pzero, thick] (d2ll) -- (d2);
\draw[pzero, thick] (d2) -- (u1);
\draw[pzero, thick] (u1) -- (u1ur);

\draw[ptwo, thick] (d2down) -- (d2);
\draw[ptwo, thick] (d2) -- (d2ur);

\draw[pthree, thick] (u1ulow) -- (u1);
\draw[pthree, thick] (u1) -- (u1up);

\foreach \p in {d2,u1}{
  \filldraw[white, draw=black, thick] (\p) circle (0.13);
}

\node[above right=4pt and 2pt] at (u1) {$u$};
\node[below left=4pt and 2pt] at (d2) {$d$};

\node[below left=2pt] at (d2ll) {$e_0^-$};
\node[below=2pt] at (d2down) {$e_1^-$};
\node[right=2pt] at (d2ur) {$e_1^+$};

\node[left=2pt] at ($(d2)!0.5!(u1)$) {$b$};

\node[left=2pt] at (u1ulow) {$e_2^-$};
\node[right=2pt] at (u1ur) {$e_0^+$};
\node[above=2pt] at (u1up) {$e_2^+$};

\end{tikzpicture}
\caption{Example of molecule with two atoms and two layers $M_U,M_D$, separated with a dashed line. Each color corresponds to a particle line. Moreover, we have $\mathcal{M}=\{d,u\}$, $\mathcal{E}=\{e_0^-,e_1^-,e_2^-,e_0^+,e_1^+,e_2^+,b\}$, $\mathcal{L}=\emptyset$.
}
\label{fig:moleculeexample}
\end{figure}

In \cite{DHM25a}, the authors make the distinction between $O$-atoms and $C$-atoms. The $O$-atoms are overlapped particles in the senses that the particles cross each other without collision. This is a mathematical artefact to conduct the proof. In the sequel, we work with collision history without these overlaps.
A molecule represent some collision scenario starting from an initial set of particles and letting it evolve following the hard sphere dynamics.
In the next section, we explain how molecules are related to analytic quantities.
\subsection{Cumulant formula}
We start from the ansatz \cite[Section 3]{DHM25a}:
$$f_s(\ell\tau,z_s)=\sum_{H\subset \{1,\ldots,s\}}\left(f^A(\ell\tau)\right)^{\otimes(\{1,\ldots,s\}\setminus H)}
E_H(\ell\tau,z_H)+\mathrm{Err}(\ell\tau,z_s)$$
and we will neglect the error term. We can control the cumulants using the molecules:
$$
\left\lvert E_H(\ell\tau,z_H)\right\rvert
\leq
\sum_{\underset{r(M)=H}{M\in\mathcal{F}}}
\left\lvert \mathcal{I}\mathcal{N}_{M}\right\rvert(z_H)
$$
where $r(M)$ is the set of particle lines penetrating the top horizontal line $\ell\tau$ and $\mathcal{F}$ is the set of molecules such that:
\begin{itemize}
    \item Every connected component of $M_{\ell'}$
    contains at least one particle line from
    $r(M_{\ell'})$.

    \item The number of atoms in $M_{\ell'}$ is bounded by
    $\lvert\log\varepsilon\rvert^{O(1)}$.

    \item The layer $M_{\ell'}$ is almost a forest.

    \item Every particle line in $r(M_{\ell'})$ is connected
    either to another particle line in
    $r(M_{\ell'})$, or to a particle line in
    $r(M_{\ell'-1})$.
\end{itemize}
As we want to control the $L^1$-norm of $\left\lvert \mathcal{I}\mathcal{N}_{M} \right\rvert\left(z_{r(M)}\right)$, we can establish the following expression for $\lVert\left\lvert \mathcal{I}\mathcal{N}_{M} \right\rvert
\left(z_{r(M)}\right)\rVert_{L^1}$ (\cite[Definition 3.23]{DHM25a}):
$$
\lVert\left\lvert \mathcal{I}\mathcal{N}_{M} \right\rvert
\left(z_{r(M)}\right)\rVert_{L^1}=\varepsilon^{(d-1)\lvert H\rvert}I_M(Q_M)$$
where $Q_M$ is a bounded function and :
$$
I_M(Q)=\varepsilon^{-(d-1)(\lvert\mathcal{E}\rvert-2\lvert \mathcal{M}\rvert)}\int_{\mathbb{R}^{2d\lvert\mathcal{E}\rvert}\times\mathbb{R}^{\lvert\mathcal{M}\rvert}}\left(\prod_{\mathfrak{n}\in\mathcal{M}}\Delta_\mathfrak{n}\right)Q\mathrm{d} z_\mathcal{E}\mathrm{d}t_\mathcal{M}
$$
with $\Delta_\mathfrak{n}$ is some distribution defined in ~\cite[Equation 2.2.2]{DHM25a}. Thanks to the product $\displaystyle\prod_{\mathfrak{n}\in\mathcal{M}}\Delta_\mathfrak{n}$, the operators $I$ verify the crucial identity: if a molecule $M$ is cut in two molecules $M_1,M_2$, we have $I_M=I_{M_1}\circ I_{M_2}$. This composition reflects the fact that cutting the molecule separates the integration along a particle line into two successive parts: the endpoint created by the cut is treated as a fixed end for one molecule and as a free end for the other, allowing the two corresponding integration operators to be applied successively. Hence, we can apply cutting algorithms to establish estimates on $I_M$.

Let us give an explicit example of computation of $I_M,Q_M$. We pick $M$ the molecule described in Figure \ref{fig:moleculeexample}. By  \cite[Definition 7.3]{DHM25a}, we have, as each $\Delta$ produces $5$ multiplicative terms:
$$
\begin{aligned}
I_M(Q)
&=
\varepsilon^{-3(d-1)}
\int_{\mathbb{R}^{14d}\times\mathbb{R}^2}
\Delta_d\Delta_u Q
\mathrm{d}z_{\mathcal{E}}
\mathrm{d}t_d\mathrm{d}t_u
\\
&=\varepsilon^{-3(d-1)}
\int_{\mathbb{R}^{14d}\times\mathbb{R}^2}
\delta\left(
x_b-x_{e_0^-}
+t_d(v_b-v_{e_0^-})
\right)\times
\delta\left(
x_{e_1^+}-x_{e_1^-}
+t_d(v_{e_1^+}-v_{e_1^-})
\right)
\\&\times
\delta\left(
\left\lvert
x_{e_0^-}-x_{e_1^-}
+t_d(v_{e_0^-}-v_{e_1^-})
\right\rvert
-\varepsilon
\right)
\times
\left[
(v_{e_0^-}-v_{e_1^-})\cdot\omega_d
\right]_{-}
\\
&\times
\delta\left(
v_b-v_{e_0^-}
+
\left[
(v_{e_0^-}-v_{e_1^-})\cdot\omega_d
\right]\omega_d
\right)
\times
\delta\left(
v_{e_1^+}-v_{e_1^-}
-
\left[
(v_{e_0^-}-v_{e_1^-})\cdot\omega_d
\right]\omega_d
\right)
\\
&\times
\delta\left(
x_{e_0^+}-x_b
+t_u(v_{e_0^+}-v_b)
\right)
\times
\delta\left(
x_{e_2^+}-x_{e_2^-}
+t_u(v_{e_2^+}-v_{e_2^-})
\right)
\\
&\times
\delta\left(
\left\lvert
x_b-x_{e_2^-}
+t_u(v_b-v_{e_2^-})
\right\rvert
-\varepsilon
\right)
\times
\left[
(v_b-v_{e_2^-})\cdot\omega_u
\right]_{-}
\\
&\times
\delta\left(
v_{e_0^+}-v_b
+
\left[
(v_b-v_{e_2^-})\cdot\omega_u
\right]\omega_u
\right)
\times
\delta\left(
v_{e_2^+}-v_{e_2^-}
-
\left[
(v_b-v_{e_2^-})\cdot\omega_u
\right]\omega_u
\right)
\\
&\times
Q(z_{\mathcal E},t_d,t_u)
\mathrm{d}z_{\mathcal E}
\mathrm{d}t_d
\mathrm{d}t_u.
\end{aligned}
$$
with $
\omega_d
=
\dfrac{1}{\varepsilon}\left(
x_{e_0^-}-x_{e_1^-}
\right),
\omega_u
=
\dfrac{1}{\varepsilon}\left(
x_b-x_{e_2^-}
+t_u(v_b-v_{e_2^-})
\right)$ ; and:
$$
\begin{aligned}
Q_M
&=
\mathbf{1}_{\mathcal L}^{\varepsilon}
\prod_{j=0}^{2}
f^A
\left(
(\ell_1[\mathbf{p}_j]-1)\tau,
x'_{\mathbf{p}_j}
+
(\ell_1[\mathbf{p}_j]-\underline{\ell})\tau v'_{\mathbf{p}_j},
v'_{\mathbf{p}_j}
\right)
\\
&=
f_0\left(x'_{\mathbf{p}_0},v'_{\mathbf{p}_0}\right)
f_0\left(x'_{\mathbf{p}_1},v'_{\mathbf{p}_1}\right)
f^A\left(
\tau,
x'_{\mathbf{p}_2}+\tau v'_{\mathbf{p}_2},
v'_{\mathbf{p}_2}
\right).
\end{aligned}
$$

\section{Cutting algorithm and toy models}
\label{Sec::3}
In ~\cite[Section 11.6]{DHM25a}, the authors show that it is possible to reduce the general reducing procedure of a multi-layer molecule to a reducing procedure for a $2$-layer molecule.
\begin{definition}[Two-layer molecule]
Let $M$ be a molecule with exactly two layers
$$
M=M_U\cup M_D,
$$
where $M_U=M_{\ell_U}$ is the upper molecule, $M_D=M_{\ell_D}$ is the lower molecule and $\ell_U>\ell_D$. We set
$$
X=\left\{
\mathfrak{n}\in M_U,
\mathfrak{n}\text{ is connected to some atom of } M_D
\right\}.
$$
A \emph{2-connection} is a bond joining two degree-$2$ atoms,
one in $M_U$ and one in $M_D$. We denote by $\#2-\mathrm{conn}$ the number of $2$-connections.
Furthermore, let $\#\mathrm{comp}(X)$ be the number of connected components of $X$.
\end{definition}
\begin{figure}[H]
\centering
\begin{tikzpicture}[scale=0.6]

\definecolor{greenline}{RGB}{60,130,80}
\definecolor{blueconn}{RGB}{0,90,255}
\definecolor{rednote}{RGB}{255,70,40}

\def\dx{1.15}
\def\slope{0.35}
\def\gap{2.05}

\coordinate (d1) at (0*\dx,0*\slope);
\coordinate (d2) at (1*\dx,1*\slope);
\coordinate (d3) at (2*\dx,2*\slope);
\coordinate (d4) at (3*\dx,3*\slope);
\coordinate (d5) at (4*\dx,4*\slope);

\coordinate (u1) at (0*\dx,\gap + 0*\slope);
\coordinate (u2) at (1*\dx,\gap + 1*\slope);
\coordinate (u3) at (2*\dx,\gap + 2*\slope);
\coordinate (u4) at (3*\dx,\gap + 3*\slope);
\coordinate (u5) at (4*\dx,\gap + 4*\slope);

\draw[black, thick] (d1) -- (d2) -- (d3) -- (d4) -- (d5);
\draw[black, thick] (u1) -- (u2) -- (u3) -- (u4) -- (u5);

\draw[black, thick] (u1) -- ($ (u1) + (-0.306,-0.093) $);
\draw[black, thick] (d1) -- ($ (d1) + (-0.306,-0.093) $);

\draw[black, thick] (u5) -- ($ (u5) + (0.306,0.093) $);
\draw[black, thick] (d5) -- ($ (d5) + (0.306,0.093) $);

\draw[greenline, thick, dashed] (-0.8,1.85) -- (6.8,1.85);

\foreach \p in {d1,d2,d3,d4,d5}{
  \draw[black, thick] (\p) -- ($(\p)+(0,-0.32)$);
}

\foreach \p in {u1,u2,u3,u4,u5}{
  \draw[black, thick] (\p) -- ($(\p)+(0,0.32)$);
}

\foreach \p in {d1,d2,d3,d4,d5}{
  \filldraw[white, draw=black, thick] (\p) circle (0.13);
}

\foreach \p in {u1,u2,u3,u4,u5}{
  \filldraw[white, draw=black, thick] (\p) circle (0.13);
}

\draw[blueconn, thick] (u1) -- (d1);
\draw[blueconn, thick] (u2) -- (d3);
\draw[blueconn, thick] (u3) -- (d2);
\draw[blueconn, thick] (u4) -- (d5);
\draw[blueconn, thick] (u5) -- (d4);

\node[above=5pt] at (u1) {$u_1$};
\node[above=5pt] at (u2) {$u_2$};
\node[above=5pt] at (u3) {$u_3$};
\node[above=5pt] at (u4) {$u_4$};
\node[above=5pt] at (u5) {$u_5$};

\node[below=6pt] at (d1) {$d_1$};
\node[below=6pt] at (d2) {$d_2$};
\node[below=6pt] at (d3) {$d_3$};
\node[below=6pt] at (d4) {$d_4$};
\node[below=6pt] at (d5) {$d_5$};

\node[right] at (6.2,2.9) {$M_U$};
\node[right] at (6.2,0.9) {$M_D$};

\end{tikzpicture}
\caption{Exemple of $2$-layer molecule. All the atoms of $M_U$ are in $X$, $\#2-\mathrm{conn}=5$ and $\mathrm{comp}(X)=1$.}
\end{figure}

The analysis of two-layer molecules is based on a decomposition into
four extremal model situations.
The below cases form the basic decomposition used in the analysis of
two-layer molecules. Roughly speaking, either many connected components
are present (Toy model II), many $2$-connections are present (Toy model
III), or neither phenomenon occurs. The latter situation corresponds to
Toy model I+ and requires the full cutting algorithm. Toy model I is
a simplified version of this regime, introduced to isolate the main
combinatorial mechanism. 

\begin{definition}
We say that $M_D$ is \emph{proper} if the following conditions hold:
\begin{enumerate}
\item $M_D$ contains no atoms of degree $1$ or $2$;
\item no two degree-$3$ atoms are adjacent;
\item no degree-$4$ atom is adjacent to two degree-$3$ atoms.
\end{enumerate}
\end{definition}
\begin{theorem}\label{thm:Kruskal}
The algorithms exposed in ~\cite{DHM25a}, Definitions 11.4, 11.13, 11.17, 11.21, build a spanning tree of the toy model molecules $M$ in a Kruskal manner.
\end{theorem}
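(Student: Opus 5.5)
The plan is to reduce Theorem~\ref{thm:Kruskal} to the local statement, Proposition~\ref{main_proposition}, and then to run an induction on the number of cutting steps. Fix a toy model molecule $M$ and one of the algorithms of \cite[Definitions 11.4, 11.13, 11.17, 11.21]{DHM25a}. Write $M=M^{(0)},M^{(1)},\dots,M^{(K)}$ for the successive molecules produced by the cuts, where $M^{(K)}$ is a union of elementary submolecules $\{3\},\{33\},\{4\}$ and so on. In parallel we build graphs $G_k=(V,E_k)$ with vertex set $V$ equal to the atoms of $M$. We start from $G_0=(V,\emptyset)$, which is acyclic. At step $k$, the cut removes a set $B_k$ of bonds of $M^{(k)}$: these become pairs of fixed/free ends. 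We then set $E_{k+1}=E_k\cup B_k'$, where $B_k'\subset B_k$ is a maximal subset for which $G_{k+1}$ stays acyclic. Proposition~\ref{main_proposition} guarantees that this choice is possible at every step, whatever the rule of the algorithm. The processing order is fixed by the algorithm (lowest degree-$3$ atom first, then top-to-bottom inside $S_\mathfrak{n}$ for UP, and the specific orderings of toy models I, I+, II and III). This order plays the role of the weight in Kruskal's algorithm: bonds are examined in a prescribed priority and kept greedily unless they close a cycle.

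There are two things to verify. The first is that $E_k$ stays disjoint from the remaining bonds of $M^{(k)}$, so that Proposition~\ref{main_proposition} applies again at the next step. This holds because a bond, once cut, is no longer a bond of any later molecule. The second is that at termination $G_K$ is a spanning tree of $M$ itself, and this needs two properties. First, the edges of $G_K$ are bonds of $M$: each element of $B_k'$ corresponds to a bond of the original molecule, since a cut only splits bonds and never creates new adjacencies. Second, $G_K$ must be connected. Here I would use the following invariant: for every $k$, two atoms lie in the same component of the graph $(V,E_k\cup\mathrm{Bonds}(M^{(k)}))$ if and only if they lie in the same component of $M$. The invariant holds at $k=0$. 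It is preserved because, by maximality, each discarded bond of $B_k\setminus B_k'$ joins two atoms already connected in $G_{k+1}$, so removing it changes no connectivity. At the end, the algorithm terminates on elementary molecules. Any bonds left inside these elementary pieces are also processed, by treating the final decomposition into single atoms as a last cut with the same greedy rule. Once this is done the invariant yields connectivity. Assuming $M$ is connected, which is the case for the toy models by the definition of $\mathcal{F}$ after reducing to a connected component, $G_K$ is then a spanning tree.

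The main obstacle is the detailed proof of Proposition~\ref{main_proposition} in each toy model, since this is where the specific step types occur. These are single-atom cuts, cuts of $\{33\}$ pairs, and the UP cut of a whole set $S_\mathfrak{n}$ and its descendants. Toy model III has its special treatment of $2$-connections, and toy model II cuts along the components of $X$. For each of these steps I would list the bonds removed and order them as the algorithm does. The greedy selection is then the standard matroid fact that the forests of a graph form the independent sets of the graphic matroid, so a greedy augmentation attains maximum cardinality. The delicate point is to check that the step-specific order produces the spanning tree the paper intends to describe. Case I+ is the most delicate because it calls the full algorithm, including UP with its two nested orderings. For it I would first treat the degree-$3$ cut in isolation and then show that the top-to-bottom sweep inside $S_\mathfrak{n}$ adds edges in a Prim-like way within a single component, which is consistent with the global Kruskal rule.
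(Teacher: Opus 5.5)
Your proposal is correct and follows the paper's route: the paper applies Proposition~\ref{main_proposition} at every cutting step by walking through each step of the four toy-model algorithms and adding the cut atoms to $G$ with a maximal cycle-free set of their bonds, then concludes by induction on the construction of $G$, exactly as you plan. Your connectivity invariant and graphic-matroid remark make explicit two points the paper only asserts: that the final acyclic graph is connected and not merely a spanning forest, and that the per-step greedy choice needs no model-specific argument. The one divergence is that the paper handles the internal bond of a cut pair such as $\{33\}$ at the moment that pair is cut, not in a final pass; this can change which spanning tree is produced, but not the conclusion.
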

The proof of Theorem~\ref{thm:Kruskal} relies on an analysis of the various
steps of the algorithms described in ~\cite{DHM25a} by keeping at each step a maximum number of edges removed. These edges are part of an acyclic graph in construction. This procedure is given in the subsections below, following a Kruskal algorithm with this order, depending also on $(t_\mathfrak{n})_{\mathfrak{n}\in\mathcal{M}},(x_e,v_e)_{e\in\mathcal{E}}$ (for more details, see ~\cite[Definition 9.4]{DHM25a}):
\begin{table}[H]
\centering
\renewcommand{\arraystretch}{1.4}
\begin{tabular}{|c|p{0.65\textwidth}|p{0.2\textwidth}|}
\hline
\textbf{Type} & \textbf{Submolecules} & \textbf{Effect of integration} \\
\hline
Good
&
 $\{3\}$ with fixed end; 
 $\{33\}$ with two fixed ends on the $3$-atoms; 
 $\{44\}$; 
 $\{4\}$; empty end which is part of some pair of $\mathcal{L}$
&
Gain $\varepsilon^\nu$ for some $\nu>0$
\\
\hline
Normal
&
$\{2\}$; $\{3\}$ (except the good ones); $\{33A\}$ (except the good ones)
&
Neither gain nor loss of any power of $\varepsilon$
\\
\hline
Bad
&
$\{4\}$ and empty ends
&
Loss due to wasted dimensions of integration
\\
\hline
\end{tabular}
\caption{Classification of submolecules according to their contribution after integration.}
\label{tab:submolecule-classification}
\end{table}
\begin{remark}
 We can compare this algorithm with the algorithm for NLS described in \cite{DH23a} and \cite{BC26}. In the NLS case, the algorithm has only one long thread with a global Kruskal order, with a notion of good, normal and bad submolecules linked with the gain and loss in phase integrals estimates. The notion of good, normal and bad is the same in the Boltzmann case, but there is no more one thread: there are four toy models representing four extremal cases depending on the geometric configuration of the molecule. In addition, some toy models use a sub-algorithm, the UP algorithm, which appears to be a Prim-like algorithm.
 \end{remark}
\subsection{Toy model I}
\begin{definition}[Toy model I]
A two-layer molecule $M=M_U\cup M_D$ is said to be a
\emph{toy model I} if $X=M_U$ and $\#2-\mathrm{conn}=0$.
In other words, every atom of the upper layer is connected to the lower
layer and no $2$-connection is present.
\end{definition}

Toy model I is the simplest configuration and serves as a prototype for
the cutting algorithm.

\begin{proof}[of Proposition~\ref{main_proposition} for Toy model I]
We perform the proof of the previous proposition by reviewing
all the steps of the algorithm given in \cite[Definition 11.4]{DHM25a}.

Let $M$ be a toy model I. We define the following cutting sequence.

\begin{enumerate}
\item If not all atoms in $M_U$ have been cut, then choose a lowest atom
$\mathfrak{n}\in M_U$ that has not been cut
(i.e. either $\mathfrak{n}$ has no child in $M_U$ or all its children in $M_U$
have been cut).

\item If $\mathfrak{n}$ is adjacent to an atom $\mathfrak{m}\in M_D$ which has degree $3$, then cut $\{\mathfrak{n},\mathfrak{m}\}$ as free and cut $\mathfrak{m}$ as free from $\{\mathfrak{n},\mathfrak{m}\}$ if $\mathfrak{n}$ has degree $4$; otherwise just cut $\mathfrak{n}$ as free. In the first case, we add to the acyclic graph $G$ the atom $\mathfrak{m}$ and the maximum amount of bonds linked to $\mathfrak{m}$ such that there is no cycle created in $G$. In the second case, we add to the acyclic graph $G$ the atom $\mathfrak{n}$ and the maximum amount of bonds linked to $\mathfrak{n}$ such that there is no cycle created in $G$.
\begin{figure}[h]
\centering
\begin{tikzpicture}[scale=0.7]

\draw[greenline,thick,dashed] (-1.5,1.1) -- (1.8,1.1);

\node[cutatom] (n) at (0,1.75) {};
\node[cutatom] (m) at (0.4,0.55) {};

\draw[black,thick] (n) -- (-0.6,2.35);
\draw[black,thick] (n) -- (0.6,2.35);

\draw[rednote,very thick] (n) -- (m);
\draw[black,thick] (n) -- (-0.55,1.15);

\draw[blueconn,thick] (m) -- (-0.2,-0.05);
\draw[blueconn,thick] (m) -- (1.0,-0.05);

\node[left=4pt] at (n) {$\mathfrak{n}$};
\node[right=4pt] at (m) {$\mathfrak m$};

\node[right] at (1.35,1.7) {$M_U$};
\node[right] at (1.35,0.5) {$M_D$};

\end{tikzpicture}
\caption{Step 2: illustration of the first case}
\end{figure}

\item If $M_D$ contains two adjacent degree-$3$ atoms
(say $\mathfrak{r}$ and $\mathfrak{r}'$), then cut $\{\mathfrak{r},\mathfrak{r}'\}$ as free; repeat until $M_D$
has no adjacent degree-$3$ atoms. We add to the acyclic graph $G$ the atoms $\mathfrak{r},\mathfrak{r}'$ and the maximum amount of bonds linked to $\mathfrak{r},\mathfrak{r}'$ such that there is no cycle created in $G$.
\begin{figure}[H]
\centering
\begin{tikzpicture}[scale=0.7]

\draw[greenline,thick,dashed] (-1.7,1.3) -- (1.7,1.3);

\node[cutatom] (r) at (-0.55,0.55) {};
\node[cutatom] (rp) at (0.55,0.55) {};

\draw[rednote,very thick] (r) -- (rp);

\draw[blueconn,thick] (r) -- (-1.05,1.05);
\draw[blueconn,thick] (r) -- (-1.05,0.05);

\draw[blueconn,thick] (rp) -- (1.05,1.05);
\draw[blueconn,thick] (rp) -- (1.05,0.05);

\node[below=5pt] at (r) {$\mathfrak r$};
\node[below=5pt] at (rp) {$\mathfrak r'$};

\node[right] at (1.35,1.7) {$M_U$};
\node[right] at (1.35,0.5) {$M_D$};

\end{tikzpicture}
\caption{Step 3}
\end{figure}
\item If $M_D$ contains two degree-$3$ atoms $\mathfrak{r}$ and $\mathfrak{r}''$ both adjacent to the same degree-$4$ atom $\mathfrak{r}'$, then cut $\{\mathfrak{r},\mathfrak{r}',\mathfrak{r}''\}$ as free and cut $\mathfrak{r}$ as free from $\{\mathfrak{r},\mathfrak{r}',\mathfrak{r}''\}$; then go to (3)--(4) and repeat until $M_D$ becomes proper ( in \cite[Definition 11.3]{DHM25a}).
\begin{figure}[H]
\centering
\begin{tikzpicture}[scale=0.7]

\draw[greenline,thick,dashed] (-1.8,1.5) -- (1.8,1.5);

\node[cutatom] (r) at (-0.9,0.25) {};
\node[cutatom] (rp) at (0,0.85) {};
\node[cutatom] (rpp) at (0.9,0.25) {};

\draw[rednote,very thick] (r) -- (rp);
\draw[rednote,very thick] (rp) -- (rpp);

\draw[black,thick] (rp) -- (-0.45,1.4);
\draw[black,thick] (rp) -- (0.45,1.4);

\draw[black,thick] (r) -- (-1.45,0.75);
\draw[black,thick] (r) -- (-1.45,-0.25);

\draw[black,thick] (rpp) -- (1.45,0.75);
\draw[black,thick] (rpp) -- (1.45,-0.25);

\node[below=4pt] at (r) {$\mathfrak r$};
\node[below=4pt] at (rp) {$\mathfrak r'$};
\node[below=4pt] at (rpp) {$\mathfrak r''$};

\node[right] at (1.45,1.8) {$M_U$};
\node[right] at (1.45,0.4) {$M_D$};

\end{tikzpicture}
\caption{Step 4}
\end{figure}
\item Go to (1) and choose the next lowest atom
$\mathfrak{n}\in M_U$ that has not been cut, and so on. We add to the acyclic graph $G$ the atom $\mathfrak{n}$ and the maximum amount of bonds linked to $\mathfrak{n}$ such that there is no cycle created in $G$.

\item Suppose all atoms in $M_U$ have been cut. If $M_D$ is proper,
then cut any degree-$3$ atom; if not, then go to (3)--(4) and repeat
to remove them, and next cut any degree-$3$ atom, and so on. In any case, we add to the acyclic graph $G$ the cut atoms and the maximum amount of bonds linked to them such that there is no cycle created in $G$.
\end{enumerate}
\end{proof}
All these steps may cut good, normal or bad submolecules, as described in this table.
\begin{table}[H]
\centering
\renewcommand{\arraystretch}{1.3}

\caption{Classification and location of the cuts produced by the toy model I algorithm.}
\end{table}
It is not obvious to read the Kruskal priority order in the previous table. We provide a second table below listing the priority.
\begin{table}[H]
\centering
\renewcommand{\arraystretch}{1.3}
%
\caption{Kruskal priority order for the cuts produced by the toy model I algorithm.}
\label{tab:submolecule-priority}
\end{table}

Here is an example of type I molecule reduction, with the construction of the associated spanning tree.
\begin{figure}[H]
\centering
%
\caption{Initial type I molecule}
\end{figure}
In the following figures, we draw the evolutive molecule on the left and its associated Kruskal spanning tree on the right.
\begin{figure}[H]
\centering
%
\caption{Molecule after applying step $2$ on the $\{4\}$-atom $u_1$}
\end{figure}
\begin{figure}[H]
\centering
%
\caption{Molecule after applying in a new loop step $2$ on the $\{3\}$-atom $u_2$}
\end{figure}
\begin{figure}[H]
\centering
%
\caption{Molecule after applying step $4$ on the $\{343\}$-submolecule $d_1-d_2-d_3$}
\end{figure}
\begin{figure}[H]
\centering
%
\caption{Molecule after applying in a new loop step $2$ on the $\{2\}$-atom $u_3$}
\end{figure}
\begin{figure}[H]
\centering
%
\caption{Molecule after applying in a new loop step $2$ on the $\{3\}$-atom $u_4$}
\end{figure}
\subsection{Toy model I+}
\begin{definition}[Toy model I+]
A two-layer molecule $M=M_U\cup M_D$ is said to be a
\emph{toy model I+} if $\#2-\mathrm{conn}=0$ and $\#\mathrm{comp}(X)\ll \lvert X\rvert$.
\end{definition}

In this regime the set $X$ has relatively few connected components,
hence large connected clusters are present.

\begin{proof}[of Proposition~\ref{main_proposition} for Toy model I+]
We perform the proof of the previous proposition by reviewing
all the steps of the algorithm given in ~\cite{DHM25a}.

Let $M$ be a toy model I+, we define the
following cutting sequence.

\begin{enumerate}
\item If $M_U$ has a degree-$2$ atom $\mathfrak{n}$, then cut it as free and add it to the acyclic graph $G$ with the maximum amount of bonds linked to $\mathfrak{n}$ such that there is no cycle created in $G$. Repeat until $M_U$ does not have degree-$2$ atoms.

\item Consider the set of all (remaining) atoms that either belong to
$X$, or have degree $3$; choose $\mathfrak{n}$ to be a lowest atom in this set
(if this set is empty then choose $\mathfrak{n}$ to be any lowest atom).
Let $S_\mathfrak{n}$ be the set of descendants of $\mathfrak{n}$ in $M_U$, and fix $\mathfrak{n}$ and $S_\mathfrak{n}$ until the end of (6).

\item Starting from $\mathfrak{n}$, each time choose a highest atom $\mathfrak{p}$ in $S_\mathfrak{n}$ that has not been cut. If $\mathfrak{p}$ is adjacent to an atom $\mathfrak{q}\in M_D$ which has degree $3$, then cut $\{\mathfrak{p},\mathfrak{q}\}$ as free and cut $\mathfrak{q}$ as free from $\{\mathfrak{p},\mathfrak{q}\}$ if $\mathfrak{p}$ has degree $4$; otherwise just cut
$\mathfrak{p}$ as free. In any cases, add the cut atoms to the acyclic graph $G$ with the maximum amount of bonds linked to them such that there is no cycle created in $G$.
\begin{figure}[H]
\centering
\begin{tikzpicture}[scale=0.7]

\draw[greenline,thick,dashed] (-1.8,1.5) -- (1.8,1.5);

\node[cutatom] (p) at (0,1.95) {};
\node[cutatom] (q) at (0.45,0.45) {};

\draw[rednote,very thick] (p) -- (q);

\draw[black,thick] (p) -- (-0.55,2.55);
\draw[black,thick] (p) -- (0.55,2.55);
\draw[black,thick] (p) -- (-0.45,1.35);

\draw[black,thick] (q) -- (-0.05,-0.15);
\draw[black,thick] (q) -- (0.95,-0.15);

\node[above=4pt] at (p) {$\mathfrak p$};
\node[below=4pt] at (q) {$\mathfrak q$};

\node[right] at (1.45,2.025) {$M_U$};
\node[right] at (1.45,0.625) {$M_D$};

\end{tikzpicture}
\caption{Step 3}
\end{figure}
\item If $M_D$ becomes non-proper, then perform the same steps (3)--(4) in the algorithm of the toy model I until it becomes proper again. Then go to (3) and repeat, until all atoms in $S_\mathfrak{n}$ have been cut.

\item When all atoms in $S_\mathfrak{n}$ have been cut, go to (1)--(2) and choose
the next $\mathfrak{n}$, and so on.

\item When all atoms in $M_U$ have been cut, perform the same step (6)
in Definition~11.4 to finish off $M_D$.
\end{enumerate}
\end{proof}
All these steps may cut good, normal or bad submolecules, as described in this table.
\begin{table}[H]
\centering
\renewcommand{\arraystretch}{1.3}

\resizebox{\textwidth}{!}{

}

\caption{Classification and location of the cuts produced by the toy model I+ algorithm.}
\end{table}
The table of priority is the same as for toy model I given in Table \ref{tab:submolecule-priority}.
Here is an example of type I+ molecule reduction, with the construction of the associated spanning tree.
\begin{figure}[H]
\centering
%
\caption{Initial type I+ molecule}
\end{figure}
As in toy model I, it is not obvious to read the Kruskal priority order in the previous table. The Kruskal order is the same than in toy model I.

In the following figures, we draw the evolutive molecule on the left and its associated Kruskal spanning tree on the right.
\begin{figure}[H]
\centering
%
\caption{Molecule after applying step 3 on the $\{4\}$-atom in $X$ $u_2$ with $\mathfrak{n}=u_2$}
\end{figure}
\begin{figure}[H]
\centering

%
\caption{Molecule after applying step 4 on $\{3\}$-atom $u_1$ and after applying in a new loop step 3 on the $\{3\}$-atom in $X$ $u_3$ with $\mathfrak{n}=u_3$}
\end{figure}
\begin{figure}[H]
\centering
%
\caption{Molecule after applying step 4 to the $\{343\}$-submolecule $d_2-d_3-d_4$}
\end{figure}

\begin{figure}[H]
\centering
%
\caption{Molecule after applying step 1 on the $\{2\}$-atom $u_4$}
\end{figure}
\begin{figure}[H]
\centering
%
\caption{Molecule after applying step 3 on the $\{4\}$-atom in $X$ $u_8$ with $\mathfrak{n}=u_8$}
\end{figure}
\begin{figure}[H]
\centering
%
\caption{Molecule after applying step 4 on $\{3\}$-atom $u_9$ and after applying in a new loop step 3 on the $\{3\}$-atom in $X$ $u_7$ with $\mathfrak{n}=u_7$}
\end{figure}
\begin{figure}[H]
\centering
%
\caption{Molecule after applying step 4 to the $\{33\}$-submolecule $d_5-d_6$}
\end{figure}
\begin{figure}[H]
\centering
%
\caption{Molecule after applying step 4 to the $\{33\}$-submolecule $d_7-d_8$}
\end{figure}

\subsection{Toy model II}
\begin{definition}[Toy model II]
A two-layer molecule $M=M_U\cup M_D$ is said to be a
\emph{toy model II} if $\#2-\mathrm{conn}=0$ and $\#\mathrm{comp}(X)\gg\lvert X\rvert$
\end{definition}

Thus toy model II corresponds to the situation where the set $X$
contains many connected components.
We recall the UP algorithm described in  \cite[Definition 12.1]{DHM25a} in the Kruskal algorithm context.
\begin{definition}
Let $M$ be a molecule with no fixed top end. The algorithm UP is the following cutting sequence:
\begin{enumerate}

\item If $M$ contains a degree-$2$ atom $\mathfrak{n}$, then cut
$\mathfrak{n}$ as free and add it to the acyclic graph $G$ with the
maximum amount of bonds linked to $\mathfrak{n}$ such that there is no
cycle created in $G$. Repeat until $M$ has no degree-$2$ atoms.

\item Consider all remaining degree-$3$ atoms in $M$ and choose a lowest
one, denoted by $\mathfrak{n}$. If there is no degree-$3$ atom left,
choose $\mathfrak{n}$ to be any lowest atom of $M$. Fix $\mathfrak{n}$ and
$S_{\mathfrak{n}}$ until the end of (3).

\item Starting from $\mathfrak{n}$, each time choose a highest atom
$\mathfrak{m}$ in $S_{\mathfrak{n}}$ that has not been cut.
If $\mathfrak{m}$ has degree $3$ and has an parent
$\mathfrak{m}^{+}$ or an child $\mathfrak{m}^{-}$ which also has
degree $3$, then cut
$\{\mathfrak{m},\mathfrak{m}^{+}\}$ or
$\{\mathfrak{m},\mathfrak{m}^{-}\}$ as free.
Otherwise, cut $\mathfrak{m}$ as free.
In any case, add the cut atoms to the acyclic graph $G$ with the
maximum amount of bonds linked to them such that there is no cycle
created in $G$.

Repeat until all atoms in $S_{\mathfrak{n}}$ have been cut, then go back to (1).
\end{enumerate}
\end{definition}
Here is an example of molecule reduction using the UP algorithm.
\begin{figure}[H]
\centering
\begin{tikzpicture}[
    line cap=round,
    line join=round,
    edge/.style={black,thick},
    vertex/.style={
        circle,
        draw=black,
        fill=white,
        inner sep=1.2pt,
        line width=0.8pt
    }
]

\coordinate (BL) at (-1,-1);
\coordinate (L)  at (0,0);
\coordinate (T)  at (1,1);
\coordinate (TR) at (2,2);
\coordinate (R)  at (2,0);
\coordinate (B)  at (1,-1);

\draw[edge] (L)--(T)--(R)--(B)--(L);
\draw[edge] (L)--(BL);
\draw[edge] (T)--(TR);

\draw[edge] (BL) -- ++(-0.18, 0.18);
\draw[edge] (BL) -- ++(-0.18,-0.18);
\draw[edge] (BL) -- ++( 0.18,-0.18);

\draw[edge] (L) -- ++(-0.18, 0.18);

\draw[edge] (T) -- ++(-0.18, 0.18);

\draw[edge] (TR) -- ++(-0.18, 0.18);
\draw[edge] (TR) -- ++( 0.18,-0.18);
\draw[edge] (TR) -- ++( 0.18, 0.18);

\draw[edge] (R) -- ++( 0.18,-0.18);
\draw[edge] (R) -- ++( 0.18, 0.18);

\draw[edge] (B) -- ++(-0.18,-0.18);
\draw[edge] (B) -- ++( 0.18,-0.18);
\
\node[font=\scriptsize] at (2.24,1.82) {$\times$};

\foreach \P in {BL,L,T,TR,R,B}{
    \node[vertex] at (\P) {};
}

\node[below left=4pt]  at (BL) {$\mathfrak{n}_1$};
\node[left=4pt]        at (L)  {$\mathfrak{n}_2$};
\node[above left=4pt]  at (T)  {$\mathfrak{n}_3$};
\node[above right=4pt] at (TR) {$\mathfrak{n}_4$};
\node[right=4pt]       at (R)  {$\mathfrak{n}_5$};
\node[below=4pt]       at (B)  {$\mathfrak{n}_6$};

\end{tikzpicture}
\caption{Initial molecule. The atom $\mathfrak{n}_4$ has one bottom fixed end which is marked via the symbol $ \times $.}
\end{figure}
\begin{figure}[H]
\centering
\begin{tikzpicture}[
    line cap=round,
    line join=round,
    edge/.style={black,thick},
    rededge/.style={rednote,thick},
    vertex/.style={
        circle,
        draw=black,
        fill=white,
        inner sep=1.2pt,
        line width=0.8pt
    },
    redvertex/.style={
        circle,
        draw=rednote,
        fill=white,
        inner sep=1.2pt,
        line width=0.8pt
    }
]

\begin{scope}[xshift=0cm]
\coordinate (BL) at (-1,-1);
\coordinate (L)  at (0,0);
\coordinate (T)  at (1,1);
\coordinate (R)  at (2,0);
\coordinate (B)  at (1,-1);

\draw[edge] (L)--(T)--(R)--(B)--(L);
\draw[edge] (L)--(BL);

\draw[edge] (BL) -- ++(-0.18, 0.18);
\draw[edge] (BL) -- ++(-0.18,-0.18);
\draw[edge] (BL) -- ++( 0.18,-0.18);

\draw[edge] (L) -- ++(-0.18, 0.18);

\draw[edge] (T) -- ++(-0.18, 0.18);
\draw[edge] (T) -- ++( 0.18, 0.18);

\draw[edge] (R) -- ++( 0.18,-0.18);
\draw[edge] (R) -- ++( 0.18, 0.18);

\draw[edge] (B) -- ++(-0.18,-0.18);
\draw[edge] (B) -- ++( 0.18,-0.18);

\foreach \P in {BL,L,T,R,B}{
    \node[vertex] at (\P) {};
}

\node[below left=4pt] at (BL) {$\mathfrak{n}_1$};
\node[left=4pt]       at (L)  {$\mathfrak{n}_2$};
\node[above left=4pt] at (T)  {$\mathfrak{n}_3$};
\node[right=4pt]      at (R)  {$\mathfrak{n}_5$};
\node[below=4pt]      at (B)  {$\mathfrak{n}_6$};
\end{scope}

\begin{scope}[xshift=6.8cm]
\coordinate (BL) at (-1,-1);
\coordinate (L)  at (0,0);
\coordinate (T)  at (1,1);
\coordinate (TR) at (2,2);
\coordinate (R)  at (2,0);
\coordinate (B)  at (1,-1);

\draw[edge]    (L)--(T)--(R)--(B)--(L);
\draw[edge]    (L)--(BL);
\draw[rededge] (T)--(TR);

\draw[edge] (BL) -- ++(-0.18, 0.18);
\draw[edge] (BL) -- ++(-0.18,-0.18);
\draw[edge] (BL) -- ++( 0.18,-0.18);

\draw[edge] (L) -- ++(-0.18, 0.18);

\draw[edge] (T) -- ++(-0.18, 0.18);

\draw[rededge] (TR) -- ++(-0.18, 0.18);
\draw[rededge] (TR) -- ++( 0.18,-0.18);
\draw[rededge] (TR) -- ++( 0.18, 0.18);
\node[font=\scriptsize,text=rednote] at (2.24,1.82) {$\times$};

\draw[edge] (R) -- ++( 0.18,-0.18);
\draw[edge] (R) -- ++( 0.18, 0.18);

\draw[edge] (B) -- ++(-0.18,-0.18);
\draw[edge] (B) -- ++( 0.18,-0.18);

\node[vertex]    at (BL) {};
\node[vertex]    at (L)  {};
\node[vertex]    at (T)  {};
\node[redvertex] at (TR) {};
\node[vertex]    at (R)  {};
\node[vertex]    at (B)  {};

\node[below left=4pt]  at (BL) {$\mathfrak{n}_1$};
\node[left=4pt]        at (L)  {$\mathfrak{n}_2$};
\node[above left=4pt]  at (T)  {$\mathfrak{n}_3$};
\node[above right=4pt] at (TR) {$\mathfrak{n}_4$};
\node[right=4pt]       at (R)  {$\mathfrak{n}_5$};
\node[below=4pt]       at (B)  {$\mathfrak{n}_6$};
\end{scope}

\end{tikzpicture}
\caption{UP algorithm with $\mathfrak{n}=\mathfrak{n}_4$ and $S_{\mathfrak{n}}=\{\mathfrak{n}_4,\mathfrak{n}_3,\mathfrak{n}_2,\mathfrak{n}_5,\mathfrak{n}_1,\mathfrak{n}_6\}$, cutting $\mathfrak{m}=\mathfrak{n}_4$.}
\end{figure}

\begin{figure}[H]
\centering
\begin{tikzpicture}[
    line cap=round,
    line join=round,
    edge/.style={black,thick},
    rededge/.style={rednote,thick},
    vertex/.style={
        circle,
        draw=black,
        fill=white,
        inner sep=1.2pt,
        line width=0.8pt
    },
    redvertex/.style={
        circle,
        draw=rednote,
        fill=white,
        inner sep=1.2pt,
        line width=0.8pt
    }
]

\begin{scope}[xshift=0cm]
\coordinate (BL) at (-1,-1);
\coordinate (L)  at (0,0);
\coordinate (R)  at (2,0);
\coordinate (B)  at (1,-1);

\draw[edge] (L)--(B)--(R);
\draw[edge] (L)--(BL);

\draw[edge] (BL) -- ++(-0.18, 0.18);
\draw[edge] (BL) -- ++(-0.18,-0.18);
\draw[edge] (BL) -- ++( 0.18,-0.18);

\draw[edge] (L) -- ++(-0.18, 0.18);
\draw[edge] (L) -- ++( 0.18, 0.18);

\draw[edge] (R) -- ++(-0.18, 0.18);
\draw[edge] (R) -- ++( 0.18,-0.18);
\draw[edge] (R) -- ++( 0.18, 0.18);

\draw[edge] (B) -- ++(-0.18,-0.18);
\draw[edge] (B) -- ++( 0.18,-0.18);

\foreach \P in {BL,L,R,B}{
    \node[vertex] at (\P) {};
}

\node[below left=4pt] at (BL) {$\mathfrak{n}_1$};
\node[left=4pt]       at (L)  {$\mathfrak{n}_2$};
\node[right=4pt]      at (R)  {$\mathfrak{n}_5$};
\node[below=4pt]      at (B)  {$\mathfrak{n}_6$};
\end{scope}

\begin{scope}[xshift=6.8cm]
\coordinate (BL) at (-1,-1);
\coordinate (L)  at (0,0);
\coordinate (T)  at (1,1);
\coordinate (TR) at (2,2);
\coordinate (R)  at (2,0);
\coordinate (B)  at (1,-1);

\draw[edge]    (L)--(BL);
\draw[edge]    (B)--(L);
\draw[edge]    (R)--(B);
\draw[rededge] (L)--(T)--(R);
\draw[rededge] (T)--(TR);

\draw[edge] (BL) -- ++(-0.18, 0.18);
\draw[edge] (BL) -- ++(-0.18,-0.18);
\draw[edge] (BL) -- ++( 0.18,-0.18);

\draw[edge] (L) -- ++(-0.18, 0.18);

\draw[rededge] (T) -- ++(-0.18, 0.18);

\draw[rededge] (TR) -- ++(-0.18, 0.18);
\draw[rededge] (TR) -- ++( 0.18,-0.18);
\draw[rededge] (TR) -- ++( 0.18, 0.18);
\node[font=\scriptsize,text=rednote] at (2.24,1.82) {$\times$};

\draw[edge] (R) -- ++( 0.18,-0.18);
\draw[edge] (R) -- ++( 0.18, 0.18);

\draw[edge] (B) -- ++(-0.18,-0.18);
\draw[edge] (B) -- ++( 0.18,-0.18);

\node[vertex]    at (BL) {};
\node[vertex]    at (L)  {};
\node[redvertex] at (T)  {};
\node[redvertex] at (TR) {};
\node[vertex]    at (R)  {};
\node[vertex]    at (B)  {};

\node[below left=4pt]  at (BL) {$\mathfrak{n}_1$};
\node[left=4pt]        at (L)  {$\mathfrak{n}_2$};
\node[above left=4pt]  at (T)  {$\mathfrak{n}_3$};
\node[above right=4pt] at (TR) {$\mathfrak{n}_4$};
\node[right=4pt]       at (R)  {$\mathfrak{n}_5$};
\node[below=4pt]       at (B)  {$\mathfrak{n}_6$};
\end{scope}

\end{tikzpicture}
\caption{UP algorithm with the same fixed $\mathfrak{n}=\mathfrak{n}_4$ and $S_{\mathfrak{n}}$, cutting the highest remaining atom $\mathfrak{m}=\mathfrak{n}_3$.}
\end{figure}

\begin{figure}[H]
\centering
\begin{tikzpicture}[
    line cap=round,
    line join=round,
    edge/.style={black,thick},
    rededge/.style={rednote,thick},
    vertex/.style={
        circle,
        draw=black,
        fill=white,
        inner sep=1.2pt,
        line width=0.8pt
    },
    redvertex/.style={
        circle,
        draw=rednote,
        fill=white,
        inner sep=1.2pt,
        line width=0.8pt
    }
]

\begin{scope}[xshift=0cm]
\coordinate (BL) at (-1,-1);
\coordinate (R)  at (2,0);
\coordinate (B)  at (1,-1);

\draw[edge] (R)--(B);

\draw[edge] (BL) -- ++(-0.18, 0.18);
\draw[edge] (BL) -- ++(-0.18,-0.18);
\draw[edge] (BL) -- ++( 0.18,-0.18);
\draw[edge] (BL) -- ++( 0.18, 0.18);

\draw[edge] (R) -- ++(-0.18, 0.18);
\draw[edge] (R) -- ++( 0.18,-0.18);
\draw[edge] (R) -- ++( 0.18, 0.18);

\draw[edge] (B) -- ++(-0.18, 0.18);
\draw[edge] (B) -- ++(-0.18,-0.18);
\draw[edge] (B) -- ++( 0.18,-0.18);

\foreach \P in {BL,R,B}{
    \node[vertex] at (\P) {};
}

\node[below left=4pt] at (BL) {$\mathfrak{n}_1$};
\node[right=4pt]      at (R)  {$\mathfrak{n}_5$};
\node[below=4pt]      at (B)  {$\mathfrak{n}_6$};
\end{scope}

\begin{scope}[xshift=6.8cm]
\coordinate (BL) at (-1,-1);
\coordinate (L)  at (0,0);
\coordinate (T)  at (1,1);
\coordinate (TR) at (2,2);
\coordinate (R)  at (2,0);
\coordinate (B)  at (1,-1);

\draw[rededge] (L)--(BL);
\draw[rededge] (B)--(L);
\draw[rededge] (L)--(T)--(R);
\draw[rededge] (T)--(TR);
\draw[edge]    (R)--(B);

\draw[edge] (BL) -- ++(-0.18, 0.18);
\draw[edge] (BL) -- ++(-0.18,-0.18);
\draw[edge] (BL) -- ++( 0.18,-0.18);

\draw[rededge] (L) -- ++(-0.18, 0.18);

\draw[rededge] (T) -- ++(-0.18, 0.18);

\draw[rededge] (TR) -- ++(-0.18, 0.18);
\draw[rededge] (TR) -- ++( 0.18,-0.18);
\draw[rededge] (TR) -- ++( 0.18, 0.18);
\node[font=\scriptsize,text=rednote] at (2.24,1.82) {$\times$};

\draw[edge] (R) -- ++( 0.18,-0.18);
\draw[edge] (R) -- ++( 0.18, 0.18);

\draw[edge] (B) -- ++(-0.18,-0.18);
\draw[edge] (B) -- ++( 0.18,-0.18);

\node[vertex]    at (BL) {};
\node[redvertex] at (L)  {};
\node[redvertex] at (T)  {};
\node[redvertex] at (TR) {};
\node[vertex]    at (R)  {};
\node[vertex]    at (B)  {};

\node[below left=4pt]  at (BL) {$\mathfrak{n}_1$};
\node[left=4pt]        at (L)  {$\mathfrak{n}_2$};
\node[above left=4pt]  at (T)  {$\mathfrak{n}_3$};
\node[above right=4pt] at (TR) {$\mathfrak{n}_4$};
\node[right=4pt]       at (R)  {$\mathfrak{n}_5$};
\node[below=4pt]       at (B)  {$\mathfrak{n}_6$};
\end{scope}

\end{tikzpicture}
\caption{UP algorithm with the same fixed $\mathfrak{n}=\mathfrak{n}_4$ and $S_{\mathfrak{n}}$. The highest remaining atoms are $\mathfrak{n}_2$ and $\mathfrak{n}_5$; resolving the tie to the left yields $\mathfrak{m}=\mathfrak{n}_2$.}
\end{figure}

\begin{figure}[H]
\centering
\begin{tikzpicture}[
    line cap=round,
    line join=round,
    edge/.style={black,thick},
    rededge/.style={rednote,thick},
    vertex/.style={
        circle,
        draw=black,
        fill=white,
        inner sep=1.2pt,
        line width=0.8pt
    },
    redvertex/.style={
        circle,
        draw=rednote,
        fill=white,
        inner sep=1.2pt,
        line width=0.8pt
    }
]

\begin{scope}[xshift=0cm]
\coordinate (BL) at (-1,-1);

\draw[edge] (BL) -- ++(-0.18, 0.18);
\draw[edge] (BL) -- ++(-0.18,-0.18);
\draw[edge] (BL) -- ++( 0.18,-0.18);
\draw[edge] (BL) -- ++( 0.18, 0.18);

\node[vertex] at (BL) {};

\node[below left=4pt] at (BL) {$\mathfrak{n}_1$};
\end{scope}

\begin{scope}[xshift=6.8cm]
\coordinate (BL) at (-1,-1);
\coordinate (L)  at (0,0);
\coordinate (T)  at (1,1);
\coordinate (TR) at (2,2);
\coordinate (R)  at (2,0);
\coordinate (B)  at (1,-1);

\draw[rededge] (L)--(BL);
\draw[rededge] (B)--(L);
\draw[rededge] (L)--(T)--(R)--(B);
\draw[rededge] (T)--(TR);

\draw[edge] (BL) -- ++(-0.18, 0.18);
\draw[edge] (BL) -- ++(-0.18,-0.18);
\draw[edge] (BL) -- ++( 0.18,-0.18);

\draw[rededge] (L) -- ++(-0.18, 0.18);

\draw[rededge] (T) -- ++(-0.18, 0.18);

\draw[rededge] (TR) -- ++(-0.18, 0.18);
\draw[rededge] (TR) -- ++( 0.18,-0.18);
\draw[rededge] (TR) -- ++( 0.18, 0.18);
\node[font=\scriptsize,text=rednote] at (2.24,1.82) {$\times$};

\draw[rededge] (R) -- ++( 0.18,-0.18);
\draw[rededge] (R) -- ++( 0.18, 0.18);

\draw[rededge] (B) -- ++(-0.18,-0.18);
\draw[rededge] (B) -- ++( 0.18,-0.18);

\node[vertex]    at (BL) {};
\node[redvertex] at (L)  {};
\node[redvertex] at (T)  {};
\node[redvertex] at (TR) {};
\node[redvertex] at (R)  {};
\node[redvertex] at (B)  {};

\node[below left=4pt]  at (BL) {$\mathfrak{n}_1$};
\node[left=4pt]        at (L)  {$\mathfrak{n}_2$};
\node[above left=4pt]  at (T)  {$\mathfrak{n}_3$};
\node[above right=4pt] at (TR) {$\mathfrak{n}_4$};
\node[right=4pt]       at (R)  {$\mathfrak{n}_5$};
\node[below=4pt]       at (B)  {$\mathfrak{n}_6$};
\end{scope}

\end{tikzpicture}
\caption{UP algorithm with the same fixed $\mathfrak{n}=\mathfrak{n}_4$ and $S_{\mathfrak{n}}$. Since $\deg(\mathfrak{n}_5)=\deg(\mathfrak{n}_6)=3$ and $\mathfrak{n}_6$ is a child of $\mathfrak{n}_5$, the pair $\{\mathfrak{n}_5,\mathfrak{n}_6\}$ is cut as a $\{33\}$ molecule.}
\end{figure}

\begin{figure}[H]
\centering
\begin{tikzpicture}[
    line cap=round,
    line join=round,
    edge/.style={black,thick},
    rededge/.style={rednote,thick},
    vertex/.style={
        circle,
        draw=black,
        fill=white,
        inner sep=1.2pt,
        line width=0.8pt
    },
    redvertex/.style={
        circle,
        draw=rednote,
        fill=white,
        inner sep=1.2pt,
        line width=0.8pt
    }
]

\begin{scope}[xshift=0cm]
\node at (0.5,0.2) {};
\end{scope}

\begin{scope}[xshift=6.8cm]
\coordinate (BL) at (-1,-1);
\coordinate (L)  at (0,0);
\coordinate (T)  at (1,1);
\coordinate (TR) at (2,2);
\coordinate (R)  at (2,0);
\coordinate (B)  at (1,-1);

\draw[rededge] (L)--(T)--(R)--(B)--(L);
\draw[rededge] (L)--(BL);
\draw[rededge] (T)--(TR);

\draw[rededge] (BL) -- ++(-0.18, 0.18);
\draw[rededge] (BL) -- ++(-0.18,-0.18);
\draw[rededge] (BL) -- ++( 0.18,-0.18);
\draw[rededge] (BL) -- ++( 0.18, 0.18);

\draw[rededge] (L) -- ++(-0.18, 0.18);

\draw[rededge] (T) -- ++(-0.18, 0.18);

\draw[rededge] (TR) -- ++(-0.18, 0.18);
\draw[rededge] (TR) -- ++( 0.18,-0.18);
\draw[rededge] (TR) -- ++( 0.18, 0.18);
\node[font=\scriptsize,text=rednote] at (2.24,1.82) {$\times$};

\draw[rededge] (R) -- ++( 0.18,-0.18);
\draw[rededge] (R) -- ++( 0.18, 0.18);

\draw[rededge] (B) -- ++(-0.18,-0.18);
\draw[rededge] (B) -- ++( 0.18,-0.18);

\foreach \P in {BL,L,T,TR,R,B}{
    \node[redvertex] at (\P) {};
}

\node[below left=4pt]  at (BL) {$\mathfrak{n}_1$};
\node[left=4pt]        at (L)  {$\mathfrak{n}_2$};
\node[above left=4pt]  at (T)  {$\mathfrak{n}_3$};
\node[above right=4pt] at (TR) {$\mathfrak{n}_4$};
\node[right=4pt]       at (R)  {$\mathfrak{n}_5$};
\node[below=4pt]       at (B)  {$\mathfrak{n}_6$};
\end{scope}

\end{tikzpicture}
\caption{UP algorithm with the same fixed $n=\mathfrak{n}_4$ and $S_{\mathfrak{n}}$, cutting the last remaining atom $\mathfrak{m}=\mathfrak{n}_1$. The set $S_{\mathfrak{n}}$ is then exhausted.}
\end{figure}

There is no optimisation in the UP algorithm, as it acts like a breadth-first search starting from a 3-degree atom. As we have to choose arbitrary the 3-degree root, the algorithm which builds the spanning tree of the molecule is a Prim-style algorithm, and not a Kruskal-style algorithm.

\begin{proof}[of Proposition~\ref{main_proposition} for Toy model II]
We perform the proof by reviewing
all the steps of the 3COMPUP algorithm given in \cite[Definition 11.17]{DHM25a}, .

Let $M$ be a molecule that is a tree, contains only degree-$3$ and
degree-$4$ atoms and has no top fixed end.
Let the set of degree-$3$ atoms in $M$ be $M_3$ with $\lvert M_3\rvert\geqslant 2$.

We define the following cutting sequence after cutting $M_D$, after applying the DOWN algorithm on $M_D$, which is the UP algorithm defined below, applied to the upside-down molecule).

\begin{enumerate}
\item Apply\cite[ Lemma~11.16]{DHM25a} to $M$ and select a connected no-bottom subset $A$ such that every child of atoms in $A$ is in $A$, and $A$ contains exactly two degree 3-atoms. This set is fixed until the end of (5).

\item Choose a lowest degree-$3$ atom $\mathfrak{n}$ in $A$ that has not been cut. Let $S_\mathfrak{n}$ be the set of descendants of $\mathfrak{n}$. This $\mathfrak{n}$ and $S_\mathfrak{n}$ are fixed until the end of (3).

\item Starting from $\mathfrak{n}$, each time choose a highest atom $\mathfrak{m}$ in $S_\mathfrak{n}$ that has not been cut. If $\mathfrak{m}$ has degree $3$ and has a parent $\mathfrak{m}^+$ or child $\mathfrak{m}^-$ that also has degree $3$, then cut $\{\mathfrak{m},\mathfrak{m}^\pm\}$ as free; otherwise cut $\mathfrak{m}$ as free. In any cases, add the cut atoms to the acyclic graph $G$ with the maximum amount of bonds linked to them such that there is no cycle created in $G$. Repeat until all atoms in $S_\mathfrak{n}$ have been cut.
\begin{figure}[H]
\centering
\begin{tikzpicture}[scale=0.7]

\node[cutatom] (m) at (0,1.75) {};
\node[cutatom] (mp) at (0.45,0.75) {};

\draw[rednote,very thick] (m) -- (mp);

\draw[black,thick] (m) -- (-0.55,2.35);
\draw[black,thick] (m) -- (0.55,2.35);

\draw[black,thick] (mp) -- (-0.05,0.15);
\draw[black,thick] (mp) -- (0.95,0.15);

\node[above=4pt] at (m) {$\mathfrak{m}$};
\node[below=4pt] at (mp) {$\mathfrak{m}^-$};
\end{tikzpicture}
\caption{Step 3 (illustration of cutting operation only)}
\end{figure}
\item If $M$ has a degree-$2$ atom $\mathfrak{n}$, then cut it and add it to the acyclic graph $G$ with the maximum amount of bonds linked to $\mathfrak{n}$ such that there is no cycle created in $G$; repeat until $M$ has no degree-$2$ atoms.
Then go to (2) and choose the next $\mathfrak{n}$, and so on.

\item When all atoms in $A$ have been cut, remove $A$ from $M$ and add the atoms of $A$ to $G$ with the maximum amount of bound linked to them such that there is no cycle created in $G$. Repeat (1)--(4) on each remaining component until all components contain at most one degree-$3$ atom. Then finish by cutting the remaining atoms using the UP algorithm described below.
\end{enumerate}
\end{proof}
All the steps may cut good, normal or bad submolecules, as described in this table.
\begin{table}[H]
\centering
\renewcommand{\arraystretch}{1.3}

\resizebox{\textwidth}{!}{
\begin{tabular}{|c|c|c|c|}
\hline
\textbf{Step} & \textbf{Cut} & \textbf{Location} & \textbf{Type} \\
\hline

Initial step
& $\{33\}$ produced by DOWN
& $M_D$
& Good \\
& $\{2\}$ or $\{3\}$ produced by DOWN
& $M_D$
& Normal \\
& $\{4\}$ produced by DOWN
& $M_D$
& Bad \\
\hline

(1)
& Selection of the connected no-bottom set $A$
& $M_U$
& -- \\
\hline

(2)
& Selection of $\mathfrak{n}$ and $S_{\mathfrak{n}}$
& $M_U$
& -- \\
\hline

(3)
& $\{33\}$
& $M_U$
& Good \\
& $\{3\}$
& $M_U$
& Normal \\
\hline

(4)
& $\{2\}$
& $M_U$
& Normal \\
\hline

(5)
& $\{33\}$ produced while repeating (1)--(4)
& $M_U$
& Good \\
& $\{2\}$ or $\{3\}$ produced while repeating (1)--(4)
& $M_U$
& Normal \\
& $\{3\}$ produced by the final UP algorithm
& $M_U$
& Normal \\
\hline

\end{tabular}
}

\caption{Classification and location of the cuts produced by the toy model II algorithm.}
\end{table}
The main idea is to apply the UP algorithm to each submolecule $A$. There is no priority order in this algorithm.
Here is an example of type II molecule reduction, with the construction of the associated spanning tree.
\begin{figure}[H]
\centering

\begin{tikzpicture}[
    line cap=round,
    line join=round,
    edge/.style={black,thick},
    connection/.style={blueconn,thick},
    vertex/.style={
        circle,
        draw=black,
        fill=white,
        inner sep=1.2pt,
        line width=0.8pt
    }
]

\definecolor{greenline}{RGB}{60,130,80}
\definecolor{blueconn}{RGB}{0,90,255}

\def\dx{0.95}

\foreach \i in {1,...,9}{
    \pgfmathsetmacro{\x}{(\i-1)*\dx}
    \ifodd\i
        \coordinate (U\i) at (\x,1.35);
    \else
        \coordinate (U\i) at (\x,1.85);
    \fi
}

\foreach \i in {1,...,9}{
    \pgfmathsetmacro{\x}{(\i-1)*\dx}
    \ifodd\i
        \coordinate (D\i) at (\x,0.55);
    \else
        \coordinate (D\i) at (\x,0.05);
    \fi
}

\draw[edge]
    (U1)--(U2)--(U3)--(U4)--(U5)--(U6)--(U7)--(U8)--(U9);

\draw[edge]
    (D1)--(D2)--(D3)--(D4)--(D5)--(D6)--(D7)--(D8)--(D9);

\draw[edge] (U1) -- ++(-0.18, 0.18);
\draw[edge] (U1) -- ++( 0.18,-0.18);

\draw[edge] (U2) -- ++(-0.18, 0.18);
\draw[edge] (U2) -- ++( 0.18, 0.18);

\draw[edge] (U3) -- ++(-0.18,-0.18);
\draw[edge] (U3) -- ++( 0.18,-0.18);

\draw[edge] (U4) -- ++(-0.18, 0.18);
\draw[edge] (U4) -- ++( 0.18, 0.18);

\draw[edge] (U5) -- ++(0.18,-0.18);

\draw[edge] (U6) -- ++(-0.18, 0.18);
\draw[edge] (U6) -- ++( 0.18, 0.18);

\draw[edge] (U7) -- ++(-0.18,-0.18);
\draw[edge] (U7) -- ++( 0.18,-0.18);

\draw[edge] (U8) -- ++(-0.18, 0.18);
\draw[edge] (U8) -- ++( 0.18, 0.18);

\draw[edge] (U9) -- ++(0.18, 0.18);
\draw[edge] (U9) -- ++(0.18,-0.18);

\draw[edge] (D1) -- ++(-0.18, 0.18);
\draw[edge] (D1) -- ++(-0.18,-0.18);

\draw[edge] (D2) -- ++(-0.18,-0.18);
\draw[edge] (D2) -- ++( 0.18,-0.18);

\draw[edge] (D3) -- ++(-0.18, 0.18);
\draw[edge] (D3) -- ++( 0.18, 0.18);

\draw[edge] (D4) -- ++(-0.18,-0.18);
\draw[edge] (D4) -- ++( 0.18,-0.18);

\draw[edge] (D5) -- ++(-0.18,0.18);

\draw[edge] (D6) -- ++(-0.18,-0.18);
\draw[edge] (D6) -- ++( 0.18,-0.18);

\draw[edge] (D7) -- ++(-0.18, 0.18);
\draw[edge] (D7) -- ++( 0.18, 0.18);

\draw[edge] (D8) -- ++(-0.18,-0.18);
\draw[edge] (D8) -- ++( 0.18,-0.18);

\draw[edge] (D9) -- ++(-0.18, 0.18);
\draw[edge] (D9) -- ++( 0.18,-0.18);

\foreach \i in {1,5,9}{
    \draw[connection] (D\i) -- (U\i);
}

\draw[greenline,thick,dashed]
    (-0.35,0.95) -- (8.0,0.95);

\foreach \i in {1,...,9}{
    \node[vertex] at (U\i) {};
}

\foreach \i in {1,...,9}{
    \node[vertex] at (D\i) {};
}

\foreach \i in {1,...,9}{
    \node[above=4pt] at (U\i) {$u_{\i}$};
}

\foreach \i in {1,...,9}{
    \node[below=4pt] at (D\i) {$d_{\i}$};
}

\node[anchor=west] at (7.82,1.42) {$M_U$};
\node[anchor=west] at (7.82,0.48) {$M_D$};

\end{tikzpicture}
\caption{Initial type II molecule}
\end{figure}
In the following figures, we draw the evolutive molecule on the left and its associated Kruskal spanning tree on the right.
\begin{figure}[H]
\centering
\begin{tikzpicture}[scale=0.6]

\definecolor{greenline}{RGB}{60,130,80}
\definecolor{blueconn}{RGB}{0,90,255}
\definecolor{rednote}{RGB}{255,70,40}

\def\dx{0.95}

\begin{scope}[xshift=0cm]

\foreach \i in {1,...,9}{
    \pgfmathsetmacro{\x}{(\i-1)*\dx}
    \ifodd\i
        \coordinate (Lu\i) at (\x,1.35);
    \else
        \coordinate (Lu\i) at (\x,1.85);
    \fi
}

\draw[black,thick]
    (Lu1)--(Lu2)--(Lu3)--(Lu4)--(Lu5)--(Lu6)--(Lu7)--(Lu8)--(Lu9);

\draw[black,thick] (Lu1) -- ++(-0.18, 0.18);
\draw[black,thick] (Lu1) -- ++( 0.18,-0.18);
\draw[black,thick] (Lu1) -- ++(-0.18,-0.18);

\draw[black,thick] (Lu2) -- ++(-0.18, 0.18);
\draw[black,thick] (Lu2) -- ++( 0.18, 0.18);

\draw[black,thick] (Lu3) -- ++(-0.18,-0.18);
\draw[black,thick] (Lu3) -- ++( 0.18,-0.18);

\draw[black,thick] (Lu4) -- ++(-0.18, 0.18);
\draw[black,thick] (Lu4) -- ++( 0.18, 0.18);

\draw[black,thick] (Lu5) -- ++( 0.18,-0.18);
\draw[black,thick] (Lu5) -- ++(-0.18,-0.18);

\draw[black,thick] (Lu6) -- ++(-0.18, 0.18);
\draw[black,thick] (Lu6) -- ++( 0.18, 0.18);

\draw[black,thick] (Lu7) -- ++(-0.18,-0.18);
\draw[black,thick] (Lu7) -- ++( 0.18,-0.18);

\draw[black,thick] (Lu8) -- ++(-0.18, 0.18);
\draw[black,thick] (Lu8) -- ++( 0.18, 0.18);

\draw[black,thick] (Lu9) -- ++( 0.18, 0.18);
\draw[black,thick] (Lu9) -- ++( 0.18,-0.18);
\draw[black,thick] (Lu9) -- ++(-0.18,-0.18);

\draw[greenline,thick,dashed]
    (-0.35,0.95) -- (8.0,0.95);

\foreach \i in {1,...,9}{
    \filldraw[white,draw=black,thick] (Lu\i) circle (0.11);
}

\foreach \i in {1,...,9}{
    \node[above=4pt] at (Lu\i) {$u_{\i}$};
}

\node[anchor=west] at (7.82,1.42) {$M_U$};
\node[anchor=west] at (7.82,0.48) {$M_D$};

\end{scope}\begin{scope}[xshift=10.6cm]

\foreach \i in {1,...,9}{
    \pgfmathsetmacro{\x}{(\i-1)*\dx}
    \ifodd\i
        \coordinate (Ru\i) at (\x,1.35);
    \else
        \coordinate (Ru\i) at (\x,1.85);
    \fi
}

\foreach \i in {1,...,9}{
    \pgfmathsetmacro{\x}{(\i-1)*\dx}
    \ifodd\i
        \coordinate (Rd\i) at (\x,0.55);
    \else
        \coordinate (Rd\i) at (\x,0.05);
    \fi
}

\draw[black,thick]
    (Ru1)--(Ru2)--(Ru3)--(Ru4)--(Ru5)--(Ru6)--(Ru7)--(Ru8)--(Ru9);

\draw[rednote,thick]
    (Rd1)--(Rd2)--(Rd3)--(Rd4)--(Rd5)--(Rd6)--(Rd7)--(Rd8)--(Rd9);

\draw[black,thick] (Ru1) -- ++(-0.18, 0.18);
\draw[black,thick] (Ru1) -- ++( 0.18,-0.18);

\draw[black,thick] (Ru2) -- ++(-0.18, 0.18);
\draw[black,thick] (Ru2) -- ++( 0.18, 0.18);

\draw[black,thick] (Ru3) -- ++(-0.18,-0.18);
\draw[black,thick] (Ru3) -- ++( 0.18,-0.18);

\draw[black,thick] (Ru4) -- ++(-0.18, 0.18);
\draw[black,thick] (Ru4) -- ++( 0.18, 0.18);

\draw[black,thick] (Ru5) -- ++(0.18,-0.18);

\draw[black,thick] (Ru6) -- ++(-0.18, 0.18);
\draw[black,thick] (Ru6) -- ++( 0.18, 0.18);

\draw[black,thick] (Ru7) -- ++(-0.18,-0.18);
\draw[black,thick] (Ru7) -- ++( 0.18,-0.18);

\draw[black,thick] (Ru8) -- ++(-0.18, 0.18);
\draw[black,thick] (Ru8) -- ++( 0.18, 0.18);

\draw[black,thick] (Ru9) -- ++(0.18, 0.18);
\draw[black,thick] (Ru9) -- ++(0.18,-0.18);

\draw[rednote,thick] (Rd1) -- ++(-0.18, 0.18);
\draw[rednote,thick] (Rd1) -- ++(-0.18,-0.18);

\draw[rednote,thick] (Rd2) -- ++(-0.18,-0.18);
\draw[rednote,thick] (Rd2) -- ++( 0.18,-0.18);

\draw[rednote,thick] (Rd3) -- ++(-0.18, 0.18);
\draw[rednote,thick] (Rd3) -- ++( 0.18, 0.18);

\draw[rednote,thick] (Rd4) -- ++(-0.18,-0.18);
\draw[rednote,thick] (Rd4) -- ++( 0.18,-0.18);

\draw[rednote,thick] (Rd5) -- ++(-0.18,0.18);

\draw[rednote,thick] (Rd6) -- ++(-0.18,-0.18);
\draw[rednote,thick] (Rd6) -- ++( 0.18,-0.18);

\draw[rednote,thick] (Rd7) -- ++(-0.18, 0.18);
\draw[rednote,thick] (Rd7) -- ++( 0.18, 0.18);

\draw[rednote,thick] (Rd8) -- ++(-0.18,-0.18);
\draw[rednote,thick] (Rd8) -- ++( 0.18,-0.18);

\draw[rednote,thick] (Rd9) -- ++(-0.18, 0.18);
\draw[rednote,thick] (Rd9) -- ++( 0.18,-0.18);

\foreach \i in {1,5,9}{
    \draw[rednote,thick] (Rd\i) -- (Ru\i);
}

\draw[greenline,thick,dashed]
    (-0.35,0.95) -- (8.0,0.95);

\foreach \i in {1,...,9}{
    \filldraw[white,draw=black,thick] (Ru\i) circle (0.11);
}

\foreach \i in {1,...,9}{
    \filldraw[white,draw=rednote,thick] (Rd\i) circle (0.11);
}

\foreach \i in {1,...,9}{
    \node[above=4pt] at (Ru\i) {$u_{\i}$};
    \node[below=5pt] at (Rd\i) {$d_{\i}$};
}

\node[anchor=west] at (7.82,1.42) {$M_U$};
\node[anchor=west] at (7.82,0.48) {$M_D$};

\end{scope}

\end{tikzpicture}
\caption{Molecule after cutting $M_D$}
\end{figure}
\begin{figure}[H]
\begin{tikzpicture}[scale=0.6]

\definecolor{greenline}{RGB}{60,130,80}
\definecolor{blueconn}{RGB}{0,90,255}
\definecolor{rednote}{RGB}{255,70,40}

\def\dx{0.95}

\begin{scope}[xshift=0cm]

\foreach \i in {1,...,9}{
    \pgfmathsetmacro{\x}{(\i-1)*\dx}
    \ifodd\i
        \coordinate (Lu\i) at (\x,1.35);
    \else
        \coordinate (Lu\i) at (\x,1.85);
    \fi
}

\draw[black,thick]
    (Lu6)--(Lu7)--(Lu8)--(Lu9);

\draw[black,thick] (Lu6) -- ++(-0.18, 0.18);
\draw[black,thick] (Lu6) -- ++( 0.18, 0.18);
\draw[black,thick] (Lu6) -- ++(-0.18,-0.18);

\draw[black,thick] (Lu7) -- ++(-0.18,-0.18);
\draw[black,thick] (Lu7) -- ++( 0.18,-0.18);

\draw[black,thick] (Lu8) -- ++(-0.18, 0.18);
\draw[black,thick] (Lu8) -- ++( 0.18, 0.18);

\draw[black,thick] (Lu9) -- ++( 0.18, 0.18);
\draw[black,thick] (Lu9) -- ++( 0.18,-0.18);
\draw[black,thick] (Lu9) -- ++(-0.18,-0.18);

\draw[greenline,thick,dashed]
    (-0.35,0.95) -- (8.0,0.95);

\foreach \i in {6,...,9}{
    \filldraw[white,draw=black,thick] (Lu\i) circle (0.11);
}

\foreach \i in {6,...,9}{
    \node[above=4pt] at (Lu\i) {$u_{\i}$};
}

\node[anchor=west] at (7.82,1.42) {$M_U$};
\node[anchor=west] at (7.82,0.48) {$M_D$};

\end{scope}

\begin{scope}[xshift=10.6cm]

\foreach \i in {1,...,9}{
    \pgfmathsetmacro{\x}{(\i-1)*\dx}
    \ifodd\i
        \coordinate (Ru\i) at (\x,1.35);
    \else
        \coordinate (Ru\i) at (\x,1.85);
    \fi
}

\foreach \i in {1,...,9}{
    \pgfmathsetmacro{\x}{(\i-1)*\dx}
    \ifodd\i
        \coordinate (Rd\i) at (\x,0.55);
    \else
        \coordinate (Rd\i) at (\x,0.05);
    \fi
}

\draw[rednote,thick]
    (Ru1)--(Ru2)--(Ru3)--(Ru4);

\draw[black,thick]
    (Ru4)--(Ru5);

\draw[rednote,thick]
    (Ru5)--(Ru6);

\draw[black,thick]
    (Ru6)--(Ru7)--(Ru8)--(Ru9);

\draw[rednote,thick] (Ru1) -- ++(-0.18, 0.18);
\draw[rednote,thick] (Ru1) -- ++( 0.18,-0.18);

\draw[rednote,thick] (Ru2) -- ++(-0.18, 0.18);
\draw[rednote,thick] (Ru2) -- ++( 0.18, 0.18);

\draw[rednote,thick] (Ru3) -- ++(-0.18,-0.18);
\draw[rednote,thick] (Ru3) -- ++( 0.18,-0.18);

\draw[rednote,thick] (Ru4) -- ++(-0.18, 0.18);
\draw[rednote,thick] (Ru4) -- ++( 0.18, 0.18);

\draw[rednote,thick] (Ru5) -- ++(0.18,-0.18);

\draw[black,thick] (Ru6) -- ++(-0.18, 0.18);
\draw[black,thick] (Ru6) -- ++( 0.18, 0.18);

\draw[black,thick] (Ru7) -- ++(-0.18,-0.18);
\draw[black,thick] (Ru7) -- ++( 0.18,-0.18);

\draw[black,thick] (Ru8) -- ++(-0.18, 0.18);
\draw[black,thick] (Ru8) -- ++( 0.18, 0.18);

\draw[black,thick] (Ru9) -- ++(0.18, 0.18);
\draw[black,thick] (Ru9) -- ++(0.18,-0.18);

\draw[rednote,thick]
    (Rd1)--(Rd2)--(Rd3)--(Rd4)--(Rd5)--(Rd6)--(Rd7)--(Rd8)--(Rd9);

\draw[rednote,thick] (Rd1) -- ++(-0.18, 0.18);
\draw[rednote,thick] (Rd1) -- ++(-0.18,-0.18);

\draw[rednote,thick] (Rd2) -- ++(-0.18,-0.18);
\draw[rednote,thick] (Rd2) -- ++( 0.18,-0.18);

\draw[rednote,thick] (Rd3) -- ++(-0.18, 0.18);
\draw[rednote,thick] (Rd3) -- ++( 0.18, 0.18);

\draw[rednote,thick] (Rd4) -- ++(-0.18,-0.18);
\draw[rednote,thick] (Rd4) -- ++( 0.18,-0.18);

\draw[rednote,thick] (Rd5) -- ++(-0.18,0.18);

\draw[rednote,thick] (Rd6) -- ++(-0.18,-0.18);
\draw[rednote,thick] (Rd6) -- ++( 0.18,-0.18);

\draw[rednote,thick] (Rd7) -- ++(-0.18, 0.18);
\draw[rednote,thick] (Rd7) -- ++( 0.18, 0.18);

\draw[rednote,thick] (Rd8) -- ++(-0.18,-0.18);
\draw[rednote,thick] (Rd8) -- ++( 0.18,-0.18);

\draw[rednote,thick] (Rd9) -- ++(-0.18, 0.18);
\draw[rednote,thick] (Rd9) -- ++( 0.18,-0.18);

\draw[rednote,thick] (Rd1) -- (Ru1);
\draw[rednote,thick] (Rd5) -- (Ru5);
\draw[rednote,thick] (Rd9) -- (Ru9);

\draw[greenline,thick,dashed]
    (-0.35,0.95) -- (8.0,0.95);

\foreach \i in {1,...,5}{
    \filldraw[white,draw=rednote,thick] (Ru\i) circle (0.11);
}

\foreach \i in {6,...,9}{
    \filldraw[white,draw=black,thick] (Ru\i) circle (0.11);
}

\foreach \i in {1,...,9}{
    \filldraw[white,draw=rednote,thick] (Rd\i) circle (0.11);
}

\foreach \i in {1,...,9}{
    \node[above=4pt] at (Ru\i) {$u_{\i}$};
    \node[below=5pt] at (Rd\i) {$d_{\i}$};
}

\node[anchor=west] at (7.82,1.42) {$M_U$};
\node[anchor=west] at (7.82,0.48) {$M_D$};

\end{scope}

\end{tikzpicture}
\caption{Molecule after applying steps 2,3,4 to $A=\{u_1,u_2,u_3,u_4,u_5\}$}
\end{figure}
\begin{figure}[H]
\centering
\begin{tikzpicture}[scale=0.6]

\definecolor{greenline}{RGB}{60,130,80}
\definecolor{blueconn}{RGB}{0,90,255}
\definecolor{rednote}{RGB}{255,70,40}

\def\dx{0.95}

\begin{scope}[xshift=0cm]

\draw[greenline,thick,dashed]
    (-0.35,0.95) -- (8.0,0.95);

\node[anchor=west] at (7.82,1.42) {$M_U$};
\node[anchor=west] at (7.82,0.48) {$M_D$};

\end{scope}

\begin{scope}[xshift=10.6cm]

\foreach \i in {1,...,9}{
    \pgfmathsetmacro{\x}{(\i-1)*\dx}
    \ifodd\i
        \coordinate (Ru\i) at (\x,1.35);
    \else
        \coordinate (Ru\i) at (\x,1.85);
    \fi
}

\foreach \i in {1,...,9}{
    \pgfmathsetmacro{\x}{(\i-1)*\dx}
    \ifodd\i
        \coordinate (Rd\i) at (\x,0.55);
    \else
        \coordinate (Rd\i) at (\x,0.05);
    \fi
}

\draw[rednote,thick]
    (Ru1)--(Ru2)--(Ru3)--(Ru4);

\draw[black,thick]
    (Ru4)--(Ru5);

\draw[rednote,thick]
    (Ru5)--(Ru6)--(Ru7)--(Ru8);

\draw[black,thick]
    (Ru8)--(Ru9);

\draw[rednote,thick] (Ru1) -- ++(-0.18, 0.18);
\draw[rednote,thick] (Ru1) -- ++( 0.18,-0.18);

\draw[rednote,thick] (Ru2) -- ++(-0.18, 0.18);
\draw[rednote,thick] (Ru2) -- ++( 0.18, 0.18);

\draw[rednote,thick] (Ru3) -- ++(-0.18,-0.18);
\draw[rednote,thick] (Ru3) -- ++( 0.18,-0.18);

\draw[rednote,thick] (Ru4) -- ++(-0.18, 0.18);
\draw[rednote,thick] (Ru4) -- ++( 0.18, 0.18);

\draw[rednote,thick] (Ru5) -- ++(0.18,-0.18);

\draw[rednote,thick] (Ru6) -- ++(-0.18, 0.18);
\draw[rednote,thick] (Ru6) -- ++( 0.18, 0.18);

\draw[rednote,thick] (Ru7) -- ++(-0.18,-0.18);
\draw[rednote,thick] (Ru7) -- ++( 0.18,-0.18);

\draw[rednote,thick] (Ru8) -- ++(-0.18, 0.18);
\draw[rednote,thick] (Ru8) -- ++( 0.18, 0.18);

\draw[rednote,thick] (Ru9) -- ++(0.18, 0.18);
\draw[rednote,thick] (Ru9) -- ++(0.18,-0.18);

\draw[rednote,thick]
    (Rd1)--(Rd2)--(Rd3)--(Rd4)--(Rd5)--(Rd6)--(Rd7)--(Rd8)--(Rd9);

\draw[rednote,thick] (Rd1) -- ++(-0.18, 0.18);
\draw[rednote,thick] (Rd1) -- ++(-0.18,-0.18);

\draw[rednote,thick] (Rd2) -- ++(-0.18,-0.18);
\draw[rednote,thick] (Rd2) -- ++( 0.18,-0.18);

\draw[rednote,thick] (Rd3) -- ++(-0.18, 0.18);
\draw[rednote,thick] (Rd3) -- ++( 0.18, 0.18);

\draw[rednote,thick] (Rd4) -- ++(-0.18,-0.18);
\draw[rednote,thick] (Rd4) -- ++( 0.18,-0.18);

\draw[rednote,thick] (Rd5) -- ++(-0.18,0.18);

\draw[rednote,thick] (Rd6) -- ++(-0.18,-0.18);
\draw[rednote,thick] (Rd6) -- ++( 0.18,-0.18);

\draw[rednote,thick] (Rd7) -- ++(-0.18, 0.18);
\draw[rednote,thick] (Rd7) -- ++( 0.18, 0.18);

\draw[rednote,thick] (Rd8) -- ++(-0.18,-0.18);
\draw[rednote,thick] (Rd8) -- ++( 0.18,-0.18);

\draw[rednote,thick] (Rd9) -- ++(-0.18, 0.18);
\draw[rednote,thick] (Rd9) -- ++( 0.18,-0.18);

\draw[rednote,thick] (Rd1) -- (Ru1);
\draw[rednote,thick] (Rd5) -- (Ru5);
\draw[rednote,thick] (Rd9) -- (Ru9);

\draw[greenline,thick,dashed]
    (-0.35,0.95) -- (8.0,0.95);

\foreach \i in {1,...,9}{
    \filldraw[white,draw=rednote,thick] (Ru\i) circle (0.11);
}

\foreach \i in {1,...,9}{
    \filldraw[white,draw=rednote,thick] (Rd\i) circle (0.11);
}

\foreach \i in {1,...,9}{
    \node[above=4pt] at (Ru\i) {$u_{\i}$};
    \node[below=5pt] at (Rd\i) {$d_{\i}$};
}

\node[anchor=west] at (7.82,1.42) {$M_U$};
\node[anchor=west] at (7.82,0.48) {$M_D$};

\end{scope}

\end{tikzpicture}
\caption{Molecule after applying steps 2,3,4 to $A=\{u_6,u_7,u_8,u_9\}$}
\end{figure}

\subsection{Toy model III}
\begin{definition}[Toy model III]
A two-layer molecule $M=M_U\cup M_D$ is said to be a
\emph{toy model III} if $\#2-\mathrm{conn}\gg\lvert X\rvert$.
\end{definition}

In this case the molecule contains a large number of $2$-connections,
which already provide a favorable combinatorial structure.

\begin{proof}[of Proposition~\ref{main_proposition} for Toy model III]
We perform the proof of the previous proposition in Section 3.1 by reviewing
all the steps of the algorithm given in \cite[Definition 11.21]{DHM25a}.
Let $M$ be a toy model III. We have the following cutting sequence.
\begin{enumerate}
\item If $M_D$ has a degree-$2$ atom $\mathfrak{n}$, then cut it as free, add it to the acyclic graph $G$ with the maximum amount of bonds linked to $\mathfrak{n}$. Repeat until $M_D$ does not have degree-$2$ atoms.
\begin{figure}[H]
\centering
\begin{tikzpicture}[scale=0.7]

\draw[greenline,thick,dashed] (-1.6,1.5) -- (1.6,1.5);

\node[cutatom] (n) at (0,0.65) {};

\draw[red,thick] (n) -- (-0.55,1.2);
\draw[red,thick] (n) -- (0.55,0.1);

\node[right=4pt] at (n) {$\mathfrak{n}$};

\node[right] at (1.35,2.05) {$M_U$};
\node[right] at (1.35,0.65) {$M_D$};

\end{tikzpicture}
\caption{Step 1}
\end{figure}
\item Choose a highest degree-$3$ atom $\mathfrak{n}$ in $M_D$ that has not been
cut (or a highest atom if $M_D$ has no degree-$3$ atoms). Let $Z_{\mathfrak{n}}$ be the set of ancestors of $\mathfrak{n}$ in $M_D$. This $\mathfrak{n}$ and $Z_\mathfrak{n}$ are fixed until all atoms in $Z_\mathfrak{n}$ have been cut.

\item Starting from $\mathfrak{n}$, each time choose a lowest atom $\mathfrak{m}$ in $Z_\mathfrak{n}$ that has not been cut. If $\mathfrak{m}$ has degree $3$ and has a parent $\mathfrak{p}\in M_U$ that also has degree $3$, then cut $\{\mathfrak{m},\mathfrak{p}\}$ as free; otherwise cut $\mathfrak{m}$ as free. In any cases, add the cut atoms to the acyclic graph $G$ with the maximum amount of bonds linked to them such that there is no cycle created in $G$. Repeat until all atoms in $Z_\mathfrak{n}$ have been cut, then go to (1)--(2) and choose the next $\mathfrak{n}$, and so on.
\begin{figure}[H]
\centering

\caption{Step 3, first case}
\end{figure}

\item When all atoms in $M_D$ have been cut, finish off $M_U$ by
applying the UP algorithm.
\end{enumerate}
\end{proof}
All the steps may cut good, normal or bad submolecules, as described in this table.
\begin{table}[H]
\centering
\renewcommand{\arraystretch}{1.3}

\resizebox{\textwidth}{!}{
%
}

\caption{Classification and location of the cuts produced by the toy model III algorithm.}
\end{table}

We can note that the only optimisation step is the step 3.
\begin{table}[H]
\centering
\renewcommand{\arraystretch}{1.3}

\resizebox{\textwidth}{!}{
%
}
\caption{Classification and location of the cuts produced by the toy model III algorithm.}
\end{table}

Here is an example of type III molecule reduction, with the construction of the associated spanning tree.
\begin{figure}[H]
\centering
%
\caption{Initial type III molecule}
\end{figure}
In the following figures, we draw the evolutive molecule on the left and its associated Kruskal spanning tree on the right.
\begin{figure}[H]
\centering
%
\caption{Molecule after applying step 3 on the $\{4\}$-atom $d_2$ with $\mathfrak{n}=d_2$ (here, $Z_\mathfrak{n}=\{d_2\}$)}
\end{figure}
\begin{figure}[H]
\centering
%
\caption{Molecule after applying in a new loop step 3 on the $\{3\}$-atom $d_3$ with $\mathfrak{n}=\{d_3\}$ (here $Z_\mathfrak{n}=\{d_3,d_4\}$)}
\end{figure}
\begin{figure}[H]
\centering
%
\caption{Molecule after applying step 3 on the $\{33\}$-submolecule $u_3-d_4$}
\end{figure}
\begin{figure}[H]
\centering
%
\caption{Molecule after applying in a new loop step 3 on the $\{3\}$-atom $d_5$ with $\mathfrak{n}=d_5$ (here, $Z_\mathfrak{n}=\{d_5,d_6\}$)}
\end{figure}
\begin{figure}[H]
\centering
%
\caption{Molecule after applying step 3 on the $\{33\}$-submolecule $u_5-d_6$}
\end{figure}


\begin{thebibliography}{DHM25b}

	\expandafter\ifx\csname url\endcsname\relax
	\def\url#1{\texttt{#1}}\fi
	\expandafter\ifx\csname urlprefix\endcsname\relax
	\def\urlprefix{URL }\fi
	\expandafter\ifx\csname href\endcsname\relax
	\def\href#1#2{#2}\fi
	\expandafter\ifx\csname burlalt\endcsname\relax
	\def\burlalt#1#2{\href{#2}{\texttt{#1}}}\fi


	\bibitem{A75}
	{\rm R.~K. Alexander}.
	\newblock \emph{The Infinite Hard-Sphere System}.
	\newblock Ph.D. thesis, University of California, Berkeley, (1975).
	\newblock
	\burlalt{eScholarship}{https://escholarship.org/uc/item/77b0c501}.


	\bibitem{BC26}
	{\rm Y.~Bruned, V.~Clarisse}.
	\newblock \emph{Kruskal-style algorithm for cubic Schr\"odinger equation molecule reduction}.
	\newblock Preprint, (2026).
	\newblock
	\burlalt{arXiv:2603.23298}{https://arxiv.org/abs/2603.23298}.


	\bibitem{BGSR16}
	{\rm T.~Bodineau, I.~Gallagher, L.~Saint-Raymond}.
	\newblock \emph{The Brownian motion as the limit of a deterministic system of hard-spheres}.
	\newblock Invent. Math. \textbf{203}, no.~2, (2016), 493--553.
	\newblock
	\burlalt{doi:10.1007/s00222-015-0593-9}{https://dx.doi.org/10.1007/s00222-015-0593-9}.


	\bibitem{BGSS22}
	{\rm T.~Bodineau, I.~Gallagher, L.~Saint-Raymond, S.~Simonella}.
	\newblock \emph{Cluster expansion for a dilute hard sphere gas dynamics}.
	\newblock J. Math. Phys. \textbf{63}, no.~7, (2022), Paper No.~073301.
	\newblock
	\burlalt{doi:10.1063/5.0091199}{https://dx.doi.org/10.1063/5.0091199}.


	\bibitem{BGSS23}
	{\rm T.~Bodineau, I.~Gallagher, L.~Saint-Raymond, S.~Simonella}.
	\newblock \emph{Statistical dynamics of a hard sphere gas: fluctuating Boltzmann equation and large deviations}.
	\newblock Ann. of Math. (2) \textbf{198}, no.~3, (2023), 1047--1201.
	\newblock
	\burlalt{doi:10.4007/annals.2023.198.3.3}{https://dx.doi.org/10.4007/annals.2023.198.3.3}.


	\bibitem{BGSS26}
	{\rm T.~Bodineau, I.~Gallagher, L.~Saint-Raymond, S.~Simonella}.
	\newblock \emph{Derivation of the Boltzmann equation from hard-sphere dynamics
	(after Y. Deng, Z. Hani, and X. Ma)}.
	\newblock S\'eminaire Bourbaki, 78e ann\'ee, 2025--2026,
	Exp.~no.~1247, (2026).
	\newblock
	\burlalt{arXiv:2602.04407}{https://arxiv.org/abs/2602.04407}.


	\bibitem{C72}
	{\rm C.~Cercignani}.
	\newblock \emph{On the Boltzmann equation for rigid spheres}.
	\newblock Transport Theory Statist. Phys. \textbf{2}, no.~3, (1972), 211--225.
	\newblock
	\burlalt{doi:10.1080/00411457208232538}{https://dx.doi.org/10.1080/00411457208232538}.


	\bibitem{CIP94}
	{\rm C.~Cercignani, R.~Illner, M.~Pulvirenti}.
	\newblock \emph{The Mathematical Theory of Dilute Gases}.
	\newblock Applied Mathematical Sciences, \textbf{106},
	Springer, New York, (1994).
	\newblock
	\burlalt{doi:10.1007/978-1-4419-8524-8}{https://dx.doi.org/10.1007/978-1-4419-8524-8}.


	\bibitem{D18}
	{\rm R.~Denlinger}.
	\newblock \emph{The propagation of chaos for a rarefied gas of hard spheres in the whole space}.
	\newblock Arch. Ration. Mech. Anal. \textbf{229}, no.~2, (2018), 885--952.
	\newblock
	\burlalt{doi:10.1007/s00205-018-1229-1}{https://dx.doi.org/10.1007/s00205-018-1229-1}.


	\bibitem{DEx1}
	{\rm Y.~Deng}.
	\newblock \emph{Overview of the proof}.
	\newblock Expository notes on the long-time derivation of the Boltzmann equation.
	\newblock
	\burlalt{Yu Deng's expository notes}{https://drive.google.com/file/d/1jxj88jb6ckE04rw4-snC9pvy39aON5OG/view?usp=share_link}.


	\bibitem{DEx2}
	{\rm Y.~Deng}.
	\newblock \emph{Toy model algorithms}.
	\newblock Expository notes on the long-time derivation of the Boltzmann equation.
	\newblock
	\burlalt{Yu Deng's expository notes}{https://drive.google.com/file/d/1_CRocvk5zLULejg-GRsvdi2tgAB7JTb9/view?usp=share_link}.


	\bibitem{DH23a}
	{\rm Y.~Deng, Z.~Hani}.
	\newblock \emph{Full derivation of the wave kinetic equation}.
	\newblock Invent. Math. \textbf{233}, no.~2, (2023), 543--724.
	\newblock
	\burlalt{doi:10.1007/s00222-023-01189-2}{https://dx.doi.org/10.1007/s00222-023-01189-2}.


	\bibitem{DH23b}
	{\rm Y.~Deng, Z.~Hani}.
	\newblock \emph{Long time justification of wave turbulence theory}.
	\newblock Preprint, (2023).
	\newblock
	\burlalt{arXiv:2311.10082}{https://arxiv.org/abs/2311.10082}.


	\bibitem{DH26}
	{\rm Y.~Deng, Z.~Hani}.
	\newblock \emph{Propagation of chaos and higher order statistics in wave kinetic theory}.
	\newblock J. Eur. Math. Soc. (JEMS) \textbf{28}, no.~2, (2026), 673--733.
	\newblock
	\burlalt{doi:10.4171/JEMS/1488}{https://dx.doi.org/10.4171/JEMS/1488}.


	\bibitem{DHM25a}
	{\rm Y.~Deng, Z.~Hani, X.~Ma}.
	\newblock \emph{Long time derivation of the Boltzmann equation from hard sphere dynamics}.
	\newblock To appear in Ann. of Math.
	\newblock
	\burlalt{arXiv:2408.07818}{https://arxiv.org/abs/2408.07818}.


	\bibitem{DHM25b}
	{\rm Y.~Deng, Z.~Hani, X.~Ma}.
	\newblock \emph{Hilbert's sixth problem: derivation of fluid equations via Boltzmann's kinetic theory}.
	\newblock Preprint, (2025).
	\newblock
	\burlalt{arXiv:2503.01800}{https://arxiv.org/abs/2503.01800}.


	\bibitem{G58}
	{\rm H.~Grad}.
	\newblock \emph{Principles of the kinetic theory of gases}.
	\newblock In \emph{Handbuch der Physik}, Vol.~12,
	Springer, (1958), 205--294.
	\newblock
	\burlalt{doi:10.1007/978-3-642-45892-7_3}{https://dx.doi.org/10.1007/978-3-642-45892-7_3}.


	\bibitem{GST14}
	{\rm I.~Gallagher, L.~Saint-Raymond, B.~Texier}.
	\newblock \emph{From Newton to Boltzmann: Hard Spheres and Short-range Potentials}.
	\newblock Z\"urich Lectures in Advanced Mathematics,
	European Mathematical Society, (2014).
	\newblock
	\burlalt{doi:10.4171/129}{https://dx.doi.org/10.4171/129}.


	\bibitem{IP89}
	{\rm R.~Illner, M.~Pulvirenti}.
	\newblock \emph{Global validity of the Boltzmann equation for two- and three-dimensional rare gas in vacuum: Erratum and improved result}.
	\newblock Comm. Math. Phys. \textbf{121}, no.~1, (1989), 143--146.
	\newblock
	\burlalt{doi:10.1007/BF01218628}{https://dx.doi.org/10.1007/BF01218628}.


	\bibitem{K56}
	{\rm J.~B. Kruskal}.
	\newblock \emph{On the shortest spanning subtree of a graph and the traveling salesman problem}.
	\newblock Proc. Amer. Math. Soc. \textbf{7}, (1956), 48--50.
	\newblock
	\burlalt{doi:10.1090/S0002-9939-1956-0078686-7}{https://dx.doi.org/10.1090/S0002-9939-1956-0078686-7}.


	\bibitem{L75}
	{\rm O.~E. Lanford III}.
	\newblock \emph{Time evolution of large classical systems}.
	\newblock In \emph{Dynamical Systems, Theory and Applications},
	Lecture Notes in Physics, \textbf{38}, Springer, (1975), 1--111.
	\newblock
	\burlalt{doi:10.1007/3-540-07171-7_1}{https://dx.doi.org/10.1007/3-540-07171-7_1}.


	\bibitem{PS17}
	{\rm M.~Pulvirenti, S.~Simonella}.
	\newblock \emph{The Boltzmann--Grad limit of a hard sphere system: analysis of the correlation error}.
	\newblock Invent. Math. \textbf{207}, no.~3, (2017), 1135--1237.
	\newblock
	\burlalt{doi:10.1007/s00222-016-0682-4}{https://dx.doi.org/10.1007/s00222-016-0682-4}.


	\bibitem{PSS14}
	{\rm M.~Pulvirenti, C.~Saffirio, S.~Simonella}.
	\newblock \emph{On the validity of the Boltzmann equation for short range potentials}.
	\newblock Rev. Math. Phys. \textbf{26}, no.~2, (2014), 1450001.
	\newblock
	\burlalt{doi:10.1142/S0129055X14500019}{https://dx.doi.org/10.1142/S0129055X14500019}.


	\bibitem{R91}
	{\rm V.~Rivasseau}.
	\newblock \emph{From Perturbative to Constructive Renormalization}.
	\newblock Princeton University Press, (1991).
	\newblock
	\burlalt{doi:10.1515/9781400862085}{https://dx.doi.org/10.1515/9781400862085}.


	\bibitem{RW14}
	{\rm V.~Rivasseau, Z.~Wang}.
	\newblock \emph{How to Resum Feynman Graphs}.
	\newblock Ann. Henri Poincar\'e \textbf{15}, (2014), 2069--2083.
	\newblock
	\burlalt{doi:10.1007/s00023-013-0299-8}{https://dx.doi.org/10.1007/s00023-013-0299-8}.


	\bibitem{S91}
	{\rm H.~Spohn}.
	\newblock \emph{Large Scale Dynamics of Interacting Particles}.
	\newblock Texts and Monographs in Physics,
	Springer, Berlin, (1991).
	\newblock
	\burlalt{doi:10.1007/978-3-642-84371-6}{https://dx.doi.org/10.1007/978-3-642-84371-6}.


	\bibitem{V25}
	{\rm K.~D. Vassilev}.
	\newblock \emph{One-Dimensional Wave Kinetic Theory}.
	\newblock Commun. Math. Phys. \textbf{406}, (2025), Paper No.~293.
	\newblock
	\burlalt{doi:10.1007/s00220-025-05455-7}{https://dx.doi.org/10.1007/s00220-025-05455-7}.
	\newblock
	\burlalt{arXiv:2408.13693}{https://arxiv.org/abs/2408.13693}.


	\bibitem{VW26}
	{\rm K.~Vassilev, B.~Wu}.
	\newblock \emph{Rigorous Derivation of the Wave Kinetic Equation for full $\beta$-FPUT System}.
	\newblock Preprint, (2026).
	\newblock
	\burlalt{arXiv:2605.19308}{https://arxiv.org/abs/2605.19308}.


	\bibitem{Z69}
	{\rm W.~Zimmermann}.
	\newblock \emph{Convergence of Bogoliubov's method of renormalization in momentum space}.
	\newblock Comm. Math. Phys. \textbf{15}, (1969), 208--234.
	\newblock
	\burlalt{doi:10.1007/BF01645676}{https://dx.doi.org/10.1007/BF01645676}.


\end{thebibliography}
\end{document}